\newtheorem{thm}{Theorem}
\newtheorem{lem}[thm]{Lemma}
\newtheorem{cor}[thm]{Corollary}
\theoremstyle{definition}
\newtheorem{rem}[thm]{Remark}
\newtheorem{defn}[thm]{Definition}
\newtheorem{fact}[thm]{Fact}
\newtheorem*{ques*}{Question}
\newtheorem*{prob*}{Problem}
\newcommand{\base}{r}
\newcommand{\card}[1]{\left|#1\right|}
\newcommand{\pr}[1]{\left ( #1 \right )}
\newcommand{\cE}{\mathcal{E}}
\newcommand{\cC}{\mathcal{C}}
\newcommand{\cI}{\mathcal{I}}
\newcommand{\cD}{\mathcal{D}}
\newcommand{\Ieps}{{\cI,\epsilon}}
\DeclareMathOperator{\interior}{int}
\newcommand{\sB}{\mathcal{B}}
\newcommand{\A}{\mathscr{A}}
\newcommand{\bs}{\boldsymbol{\Sigma}}
\newcommand{\bp}{\boldsymbol{\Pi}}
\newcommand{\bP}{\boldsymbol{\Pi}}
\newcommand{\bS}{\boldsymbol{\Sigma}}
\newcommand{\bd}{\boldsymbol{\Delta}}
\DeclareMathOperator{\Emp}{\mathcal{E}}%{\mathfrak{m}} %\empirical measure
\DeclareMathOperator{\seq}{seq}
\renewcommand{\phi}{\varphi}
\DeclareMathOperator{\M}{\mathcal{M}}
\DeclareMathOperator{\MT}{\mathcal{M}_{\mathit T} }
\DeclareMathOperator{\lang}{\mathscr{L}}%{\Bl}%{\mathcal{L}}
\DeclareMathOperator{\G}{\mathcal{G}}
\newcommand{\dbar}{{\bar d}}
\newcommand{\weakst}{weak$^*$}
\newcommand{\eps}{\varepsilon}
\newcommand{\R}{\mathbb{R}}
\newcommand{\N}{\mathbb{N}}
\newcommand{\Q}{\mathbb{Q}}
\newcommand{\lex}{\text{lex}}
\newcommand{\sm}{\setminus}
\author[D. Airey]{Dylan Airey}
\address[D. Airey]{
Department of Mathematics, University of Texas at Austin, 2515 Speedway, Austin, TX 78712-1202, USA}
\curraddr[D. Airey]{Department of Mathematics, Princeton University, Fine Hall, Washington
Road, Princeton, NJ 08544-1000, USA}
\email{dairey@math.princeton.edu}
\author[S. Jackson]{Steve Jackson}
\address[S. Jackson]{Department of Mathematics, University of North Texas,
General Academics Building 435, 1155 Union Circle,  \#311430, Denton, TX 76203-5017, USA}
\email{stephen.jackson@unt.edu}
\author{Dominik Kwietniak}%
\address[D. Kwietniak]{
Faculty of Mathematics and Computer Science, Jagiellonian University in Krakow, ul. \L o\-jasiewicza 6, 30-348 Krak\'ow, Poland,
\and
Institute of Mathematics, Federal University of Rio de Janeiro, Cidade
Universitaria - Ilha do Fund\~ao, Rio de Janeiro 21945-909, Brazil
}
\email{dominik.kwietniak@uj.edu.pl}
\urladdr{www.im.uj.edu.pl/DominikKwietniak/}
\author[B. Mance]{Bill Mance}
\address[B. Mance]{Institute of Mathematics of Polish Academy of Science,
\'{S}niadeckich 8, 00-656 Warsaw, Poland}
\curraddr[B. Mance]{Uniwersytet im. Adama Mickiewicza w Poznaniu,
  Collegium Mathematicum, ul. Umultowska 87, 61-614 Pozna\'{n}, Poland}
\email{william.mance@amu.edu.pl}
\title[Complexity of normal numbers via generic points]%{Borel complexity of (non)normal numbers from the dynamical viewpoint}
{Borel complexity of sets of normal numbers via generic points in subshifts with specification}
\begin{document}

\begin{abstract}
We study the Borel complexity of sets of normal numbers in several numeration systems. Taking a dynamical point of view, we offer a unified treatment for continued fraction expansions and base $\base$ expansions,
and their various generalisations: generalised L{\"u}roth series expansions and $\beta$-expansions.
In fact, we consider subshifts over a countable alphabet generated by all possible expansions of numbers in $[0,1)$. Then
normal numbers correspond to generic points of shift-invariant measures.  It turns out that for these subshifts the set of generic points for a shift-invariant probability measure is precisely at the third level of the Borel hierarchy (it is a $\bp^0_3$-complete set, meaning that it is a countable intersection of $F_\sigma$-sets, but it is not possible to write it as a countable union of $G_\delta$-sets). We also solve a problem of Sharkovsky--Sivak on the Borel complexity of the basin of statistical attraction.  The crucial dynamical feature we need is a feeble form of specification. All expansions named above generate subshifts with this property.
Hence the sets of normal numbers under consideration are $\bp^0_3$-complete.
\end{abstract}

\maketitle

\section{Introduction}
Roughly speaking, a \emph{numeration system} assigns to each real number an \emph{expansion}. Here, an expansion is an infinite sequence of \emph{digits} coming from some at most countable set.  A real number is \emph{normal} in a numeration system if all \emph{asymptotic frequencies} of finite blocks of consecutive digits appearing in the expansion are \emph{typical} for the numerations systems. To put some more content into this vague description recall that a real number $\xi$ is normal in base $2$ if in its binary expansion every block of digits of length $k$ appears with asymptotic frequency $1/2^k$. It follows that for every integer $\base\ge 2$ the set of normal numbers in base $\base$ is a first category set of full Lebesgue measure. Also, the normal numbers form a Borel set. As we explain below, the same holds true for all numeration systems we consider. For more on numeration systems, including different views on that theory see \cite{BBLT,Sequences,Rigo}.

Knowing that the sets of normal numbers are Borel it is natural to gauge their complexity using the descriptive hierarchy of Borel sets.
In that hierarchy, the simplest Borel sets are
open ones and their complements (closed sets). On the next level,
there are countable intersections and countable unions of sets at the first level.
These are $G_{\delta}$ and $F_\sigma$ sets, and the third level is
formed by taking countable intersections and unions of sets at the second level. The procedure continues and provides a
stratification of the family of Borel sets into levels corresponding to countable ordinals. It is known that for
an uncountable Polish space
these levels do not collapse: at each level there appear new sets which do
not occur at any lower level of the hierarchy. Thus to every Borel set we can associate its complexity, that is, the lowest
level of the hierarchy at which the set is visible. On the other hand,
determining the position of ``naturally arising'' or ``non-ad hoc''
sets in the hierarchy is a challenging problem. Only a small number of concrete examples are known to
appear only above the third level.

A.~Kechris asked in the
90's whether the set of real numbers that are normal in base two is an
example of a Borel set properly located at the third level, which was
later confirmed by H.~Ki and T.~Linton in \cite{KiLinton}. More precisely, Ki and Linton showed that the set of numbers
that are normal in an integer base $\base\ge 2$ is a $\bp^0_3$-complete set, which means that this set is a countable intersection of $F_\sigma$ sets
and cannot be represented as a countable union of $G_\delta$-sets. Since then many authors have studied
the Borel complexity of various sets related to normal numbers, and
have extended this result in various directions \cite{AireyJacksonManceComplexityNormalPreserves,BecherHeiberSlamanAbsNormal,BecherSlamanNormal,BerosDifferenceSet}.

%Being normal in base $\base$ is well-known to be equivalent to saying
%the sequence $(\base^k \xi)_k $ is uniformly distributed  mod $1$
%(cf.\ \cite{KuN}).

Here we study analogous problems from the dynamical system
perspective. It allows us to obtain a vast generalization of the Ki
and Linton result. As our primary motivation are applications to
numeration systems we restrict ourselves to symbolic dynamical systems
(\emph{subshifts} for short) and we will address more dynamical
aspects of that theory in a forthcoming paper \cite{AJKM2}.

Before stating our main theorem, let us now briefly explain the
connection between normal numbers and generic points for subshifts.
If $\A$ is a finite or countable\footnote{We call a set \emph{countable} if it has the cardinality $\aleph_0$, where $\aleph_0$ stands for the smallest infinite cardinal. We need this extra
generality to cover continued fractions expansions and some
generalised L\"{u}roth series expansions.} set, which we call the
\emph{alphabet}, then the {\em full shift space} over $\A$ is the pair
$(\A^\omega,\sigma)$ where $\A^\omega$ is endowed with the product
topology induced by the discrete topology on $\A$, and $\sigma$ stands
for the shift map, which is given for $(x_n)_{n\in\omega}\in\A^\omega$
by $\sigma(x)_n=x_{n+1}$. By a {\em subshift} of $\A^\omega$ (or {\em
over $\A$}) we mean a pair $(X,\sigma)$, where $X$ is a nonempty
closed shift-invariant subset of $\A^\omega$, and $\sigma$ is the
shift map restricted to $X$. We also write $\A^n$ for the set all of all \emph{blocks of length $n$ over $\A$}, that is, $\A^n$ stands for the set of all finite sequences $w=w_1\ldots w_n$ with $w_j\in\A$ for $1\le j \le n$ and $n\in\N$.
%We say $X$ is a \emph{closed} (or $G_\delta$, etc.) subshift of $\A^\omega$ if $X$ is a subshift which is also a closed (or $G_\delta$, etc.)
%subset of $\A^\omega$.
As we will explain later, the set of sequences of digits which are expansions of real numbers defines a
subshift for each of the numeration systems we consider. Furthermore, normal numbers in these numerations systems always correspond to generic points for some invariant measure of the associated subshift. Recall that a Borel probability measure $\mu$ on $\A^\omega$ is \emph{shift-invariant} if
$\mu(A)=\mu(\sigma^{-1}(A))$ for every Borel set $A\subseteq \A^\omega$. We say a shift-invariant measure $\mu$ is an invariant measure for a subshift $X$ if $X$ contains the support of $\mu$, that is, $\mu(X)=1$. An invariant measure $\mu$ is \emph{ergodic} if for every Borel set $A\subseteq \A^\omega$ the condition $\sigma^{-1}(A)= A$ implies $\mu(A)\in\{0,1\}$
(this is equivalent to saying that if $A\subseteq \sigma^{-1}(A)$ then $\mu(A)\in \{ 0,1\}$).
We say that a finite block $w\in\A^n$ \emph{appears} in $x\in\A^\omega$ at the position $\ell\in \omega$ if $x_{\ell+i-1} = w_i$ for each $1 \leq i \leq n$.  Let $e(w,x,N)$ be the number of times $w$ appears in $x$ at a position $\ell<N$. Let $X$ be a subshift over $\A$ and $\mu$ be an invariant measure. A point $x\in X$ is \emph{generic}
\footnote{This definition differs from the usual definition of a generic point for Polish spaces (cf. \cite[p. 1748]{GK}),
but it is better adapted to the symbolic setting. The equivalence of these two definitions is easy to see
(c.f.\ Corollary~18.3.11 of \cite{Garling}).}
%% {is a consequence of the following: First, note that if $\A$ is at most countable, then the linear span of the set of characteristic functions of the cylinders $[w]$, where $w\in\A^n$ and $n\ge 1$ is uniformly dense in the space of uniformly continuous bounded real-valued functions on $\A^\omega$ with a complete metric compatible with the topology on $\A^\omega$. Then apply Theorem 15.2 from \cite{Aliprantis}.}
for $\mu$ if
for every finite block $w\in\A^n$ the set of positions at which $w$ appears in $x$ has the frequency equal to the measure of the set of all sequences starting with $w$, that is, if
\[
\lim_{N\to\infty}\frac{e(w,x,N)}{N}=\mu([w]),
\]
where $[w]=\{z\in\A^\omega:z_0=w_1,\ldots,z_{n-1}=w_n\}$.
By the shift-invariance of $\mu$ the measure of $[w]$ is equal to the $\mu$-probability of the occurrence of $w$ at any fixed position $\ell\in\omega$, that is,
\[
\mu([w])=\mu(\{z\in\A^\omega:z_\ell=w_1,\ldots,z_{\ell+n-1}=w_n\}).
\]
The ergodic theorem guarantees  that for every shift-invariant ergodic measure $\mu$ the set of points generic for $\mu$, denoted $G_\mu$, has full measure (this is well-known for compact spaces, for the proof of this fact in the generality considered here, see \cite[Lemma 2.2]{GK}). With this vocabulary the theorem of Ki and Linton becomes the statement that setting $X=\{0,1,\dots, \base-1\}^\omega$, the set of generic points for the Bernoulli measure $\mu$ (which is the product of the countable sequence of uniform probability measures on $\A=\{0,1,\dots, \base-1\}$)
is a $\bp^0_3$-complete set. It is then natural to ask for which subshifts
$(X,T)$ and measures $\mu$  one can prove a similar result about the Borel set $G_\mu\subseteq X$. In particular, we would like to know if the same result holds for other numeration systems than the classical base $\base$-expansions. In terms of the theory of dynamical systems, this amounts to asking for which subshifts and invariant measures the Borel complexity of the set of generic points is a $\bp^0_3$-complete set. Not surprisingly, we are not the first to pose this problem. When the present paper was being finished we learned that in the context of dynamical systems this question was first raised by A.~Sharkovsky and his disciple A.~Sivak (see \cite{SS}, which quotes \cite{Sivak} and \cite{Sharkovsky} as the primary sources, unfortunately these papers are not available in English). Sharkovsky and Sivak worked independently of the normal numbers community and used a slightly different language (for example, they called $G_\mu$ the \emph{basin of attraction of $\mu$}). Sharkovsky and Sivak noted that $G_\mu$ is always a Borel set lying at most at the third level of the hierarchy. It is also easy to see that $G_\mu$ may be empty if $\mu$ is not ergodic. Furthermore, there are easy examples with $G_\mu$ lying at the lowest level of the Borel hierarchy. To see that consider the unit circle $X=\mathbb{R}/\mathbb{Z}$, $\alpha\in\mathbb{R}\setminus\mathbb{Q}$, and let $T$ act as $x\mapsto x+\alpha\bmod 1$. Then for every point $x\in \mathbb{R}/\mathbb{Z}$ its forward $T$-orbit is the sequence $\{n \alpha+x\bmod 1:n\ge 0\}$, so each orbit is uniformly distributed $\bmod\ 1$, which means that every point in the circle is generic for the Lebesgue measure $\lambda$ on $\mathbb{R}/\mathbb{Z}$, so $G_\lambda=\mathbb{R}/\mathbb{Z}$ is a clopen set. The same holds for {\em Sturmian subshifts}, which are symbolic dynamical models for irrational rotations of the circle (see \cite[p. 321]{deVries}). Sharkovsky and Sivak asked if their upper bound for the complexity of $G_\mu$ can be reached (see Problems 3 and 5 in \cite{SS}). As we noted above this asks for a Ki and Linton type result for dynamical systems.
\footnote{Note that the equivalence between normal numbers and generic points for the Bernoulli measure implies that
the Ki and Linton result answers Problem 5 from \cite{SS} in the positive, but does not solve Problem 3 from that paper.} Because of the examples where $G_\mu$ is below the third level
we see that some assumptions on the dynamical systems are required for such a result to hold. It turns out that it suffices to
assume that the system has some form of the {\em specification property}.
The original specification property was introduced by R.\ Bowen in his
paper on Axiom A diffeomorphisms \cite{BowenAxiomA}. The specification property has played an important role in dynamics.
We refer the reader to \cite{KLO} for a discussion of the specification property
and its many variants as well as their  significance in dynamics.
%However, for the main results of this paper
%we need only a feeble form of this property.
Our main result says that for a subshift $(X,\sigma)$ possessing a feeble form of the specification property the set $G_\mu$ of generic points is $\bp^0_3$-complete for every $\sigma$-invariant Borel probability measure $\mu$.
We also demonstrate that the theorem applies to many dynamical systems generating expansions of real numbers.

Thus the main theorem, which is to our best knowledge the first result of this type for dynamical systems, contains also several previously obtained results on complexity of sets of normal numbers, as well as many new ones.
In particular, we extend the Ki-Linton result to continued fraction expansions, $\beta$-expansions,
and generalized GLS\footnote{Note that GLS stands here for \emph{generalized L\"uroth series}, that is, the notion of a generalized GLS expansion is an extension of the GLS expansion, see \cite{ITN} for more details.} expansions (of which the tent map is a special case).

In addition we note that there are subshifts, which are not so closely connected with numeration systems, but are interesting for the symbolic dynamics community, where our methods apply. These include hereditary subshifts (see Section 4 \cite{KKK} for a more detailed overview).

In \S\ref{sec:voc} we introduce basic definitions and notation, and mention
the overall strategy. We introduce in this section the weak form of the specification property
we require for our main result. In \S\ref{sec:main} we state and prove our main result.
In \S\ref{sec:app} we give a number of applications of the main result including
to continued fractions, $\beta$-expansions, generalized GLS-expansions. The enumeration system corresponding
to the tent map is a special case of a generalized GLS expansion. This then answers a
question of Sharkovsky--Sivak \cite{SS}.

\section{Vocabulary/definitions/notation} \label{sec:voc}
Throughout this paper $\omega=\{0,1,2,\ldots\}$ and $\N=\{1,2,3,\ldots\}$. The cardinality of a finite set $A$ is denoted by $|A|$.
We write $\dbar(A)$ for the {\em upper asymptotic density} of a set $A\subseteq \omega$, that is,
\[
\dbar(A)=\limsup_{n\to\infty}\frac{\card{(A\cap\{0,1,\ldots,n-1\})}}{n}.
\]

\subsection{Borel hierarchy}

We now recall some basic notions from descriptive set theory which gauges the complexity of sets in Polish spaces.
In any topological space $X$, the collection of Borel sets $\sB(X)$ is the smallest
$\sigma$-algebra containing all open sets. Elements of $\sB(X)$ are stratified into levels,
introducing the Borel hierarchy on $\sB(X)$, by defining $\bs^0_1$ to be the family of open sets,
and $\bp^0_1%=  \bsdzo
= \{ X\setminus A\colon A \in \bs^0_1\}$ to be the family of closed sets. For a countable ordinal $\alpha<\omega_1$
we let $\bs^0_\alpha$ be the collection of
countable unions $A=\bigcup_n A_n$ where each $A_n \in \bp^0_{\alpha_n}$
for some ordinal $\alpha_n<\alpha$. We also let $\bp^0_\alpha= \{ X\setminus A\colon A \in \bs^0_\alpha\}$.
Alternatively, $A\in \bp^0_\alpha$ if $A=\bigcap_n A_n$ where
$A_n\in \bs^0_{\alpha_n}$ and $\alpha_n<\alpha$ for each $n$.
We also set $\bd^0_\alpha= \bs^0_\alpha \cap \bp^0_\alpha$ for each countable ordinal $\alpha<\omega_1$, in particular
$\bd^0_1$ is the collection of clopen subsets of $X$.
Note that $\bs^0_2$ is the collection of $F_\sigma$ sets, and $\bp^0_2$
is the collection of $G_\delta$ sets.
For any topological space, $\sB(X)=\bigcup_{\alpha<\omega_1} \bs^0_\alpha=
\bigcup_{\alpha<\omega_1}\bp^0_\alpha$.  It is easy to see that all of the collections
$\bd^0_\alpha$, $\bs^0_\alpha$, $\bp^0_\alpha$ are {\em pointclasses}, that is, they are closed
under inverse images of continuous functions.
Another basic fact is that for any uncountable Polish space $X$, there is no collapse
in the levels of the Borel hierarchy, that is, all the
pointclasses $\bd^0_\alpha$, $\bs^0_\alpha$, $\bp^0_\alpha$, for any ordinal $\alpha <\omega_1$,
are distinct (for a proof, see \cite{Kechris}). Thus, these levels of the Borel hierarchy can be used
to calibrate the descriptive complexity of a set. We say a set
$A\subseteq X$ is $\bs^0_\alpha$ (resp.\ $\bp^0_\alpha$) {\em hard}
if $A \notin \bp^0_\alpha$ (resp.\ $A\notin \bs^0_\alpha$). This says $A$ is
``no simpler'' than a $\bs^0_\alpha$ set. We say $A$ is $\bs^0_\alpha$-{\em complete}
if $A\in \bs^0_\alpha\setminus \bp^0_\alpha$, that is, $A \in \bs^0_\alpha$ and
$A$ is $\bs^0_\alpha$ hard. This says $A$ is exactly at the complexity level
$\bs^0_\alpha$. Likewise, $A$ is $\bp^0_\alpha$-complete if $A\in \bp^0_\alpha\setminus\bs^0_\alpha$.

%The $\bs^1_1$ sets, or {\em analytic} sets, is the collection of continuous images of
%Borel sets (via continuous functions from a Polish space to a Polish space). The
%same collection results if one uses Borel images instead of continuous images, and
%one can also take just images of closed sets. The dual class  $\bp^1_1$, the {\em co-analytic}
%sets is the set of complements of analytic sets. We define $\bd^1_1=\bs^1_1\cap \bp^1_1$.
%The pointclasses $\bs^1_1$ and $\bp^1_1$ are closed under countable unions and intersections,
%and so contain all the Borel sets.
%A fundamental result of Suslin (see \cite{Kechris}) says that in any
%Polish space $\sB(X)=\bd^1_1$.

%% A fundamental result of Suslin (see \cite{Kechris}) says that in any
%% Polish space $\sB(X)=\bd^1_1=\bs^1_1 \cap \bp^1_1$, where
%% $\bp^1_1=\neg \bs^1_1$, and $\bs^1_1$ is the pointclass of continuous images
%% of Borel sets. Equivalently, $A \in \bs^1_1$ iff
%% $A$ can be written as $x \in a \leftrightarrow \exists y\ (x,y)\in B$
%% where $B \subseteq X\times Y$ is Borel (for some Polish space $Y$).
%% Similarly, $A\in \bp^1_1$ iff it is of the form $x\in A \leftrightarrow
%% \forall y\ (x,y)\in B$ for a Borel $B$. The $\bs^1_1$ sets are also called the {\em analytic}
%% sets, and $\bp^1_1$ the {\em co-analytic sets}. We also have
%% $\bs^1_1\neq \bp^1_1$ for any uncountable Polish space.

Let us now discuss our proofs.
In order to determine the exact position of a set $A$ in the Borel hierarchy one must prove
an \emph{upper bound}, that is one must write a condition defining $A$
which shows that it appears at some level in the hierarchy,
and then show a \emph{lower bound}, that is, show
that $A$ does not belong to any lower level in the hierarchy.
To establish a lower bound we use a technique known as ``Wadge
reduction''. It is based on the observation that our hierarchy levels
are all pointclasses %{\em pointclasses}, that is
%are closed under the operation of taking preimages through continuous
%functions.
Thus, for example,  a Borel set $A$ is $\bs^0_\alpha$-hard
if there are a Polish space $Y$, a Borel set $C\subseteq Y$ which is known to be $\bs^0_\alpha$-hard,
and a continuous function $f\colon Y\to X$ such that $f^{-1}(A)=C$. %Likewise
The same holds for the $\bp^0_\alpha$
classes. % as well as for $\bs^1_1$ and $\bp^1_1$.
Although the whole idea is plain and simple, the difficulty lies in the proper choice of the model space $Y$ and subset $C$, so that
it becomes possible to write down a definition of an appropriate continuous function.
%the construction of a continuous function is possible.

\subsection{Shift spaces} \label{sec:ss}
%%  and requires a weakening
%% of the specification property which we call the \emph{almost$^*$ specification} property.
%% Theorem~\ref{thm:smt} of \S\ref{sec:smt}, will apply to all
%% dynamical systems, but has a different weakening of the specification property
%% in its hypothesis. The first theorem therefore applies to some systems, such as those arising
%% from $\beta$-expansions, to which the second does not immediately apply.

%Let $\dbar(A)$ denote the upper asymptotic density of $A\subseteq \omega$, that is,
%\[
%\dbar(A)=\limsup_{n\to\infty}\frac{\card{(A\cap\{0,1,\ldots,n-1\})}}{n}.
%\]
%For $A,B\subseteq \Z$ the formula $\dbar(A \triangle B)$ introduces a pseudometric on the
%powerset of the set of all integers $\Z$, denoted $\mathscr{P}(\Z)$.
%Likewise, if $\A$ is a finite or countable alphabet
%and $x,y \in \A^\omega$, we let $\dbar(x,y)=\dbar(\{ n\colon x(n)\neq y(n)\})$.
%We say that $x,y\in\mathscr{A}^\omega$ are $\dbar$-asymptotic if $\dbar(x,y)=0$.

For a comprehensive introduction to symbolic dynamics we refer to the book \cite{LM} by Lind and Marcus.
For a shift space $X \subseteq \A^\omega$ and integer $n\ge 1$, we
write $\lang_n(X) \subseteq \A^n$ for the set of $n$-blocks
appearing in $X$, that is $w \in \lang_n(X)$ if and only if there
exists some $x \in X$ and $\ell \in \omega$ such that $x_{\ell+i-1} =
w_i$ for all $1 \leq i \leq n$. The \emph{length} of a block $w$ over $\A$ is the number of symbols in $w$ and it is denoted by $|w|$.
We agree that $\A^0$ consists of a single element, called the \emph{empty word}, that is, $\A^0$ contains only the unique block  over $\A$ of length $0$.
%We also say that $w$ \emph{appears} in $x$ at the position $\ell$.
By $\A^{<\omega}$ we denote the set of all finite blocks over $\A$ (including the empty word).
We let $\lang(X)=\bigcup_{n\ge 1} \lang_n(X)$ and call $\lang(X)$ the language of $X$.
Note that $\lang(X)$ does not contain the empty word.
For $n\ge 1$ and a block $w\in\A^n$, by
$[w]$ we denote the cylinder consisting of those $x \in
\A^\omega$ with $x_i = w_i$ for $1 \leq i \leq n$. If $X$ is
a subshift and $w\in\lang_n(X)$, then we define $[w]_X=[w]\cap
X$. When there is no ambiguity we drop the dependence on $X$ in our
notation and write just $[w]$ for $[w]_X$.
Henceforth, we enumerate all nonempty blocks in $\A^{<\omega}$ in such a way that all blocks appear before their proper extensions. Any such enumeration induces an analogous enumeration on $\lang(X)$ for every shift space $X\subseteq\A^\omega$, that is we can always write $\lang(X)=\{w_1,w_2,\ldots\}$
in such a way that if $w_i$ is a proper initial segment of $w_j$, then $i<j$. In any such enumeration, we always have $|w_n|\le n$ for every $n\ge 1$.
Note that the whole theory of shift spaces remains the same if instead of $\A^\omega$, we consider $\A^\N$.

\subsection{Frequencies of subblocks}
Recall that $e(w,x,N)$ denotes the number of times a block $w\in\A^{<\omega}$ appears in $x\in\A^\omega$ at a position $\ell<N$.
Similarly, we write $e'(w,u)$ for the number of times $w$ appears as a subblock of $u$. We agree that the empty word {\em never} appears as a subblock of a finite block. We say that a finite block $u$ %=u_1u_2\ldots u_t$, where $t\in\N$
is \emph{$(m,\eps)$-good} for a shift-invariant measure $\mu$ if for every $1\le j \le m$ the fraction of positions at which $w_j$ appears as a subblock of $u$ is $\eps$-close to the $\mu$-measure of the cylinder of $w_j$, that is,
we have
\begin{equation}\label{ineq:me-good}
  \mu([w_j])-\eps < \frac{e'(w_j,u)}{|u|} <\mu([w_j])+\eps\quad\text{for }j=1,\ldots,n.
\end{equation}
(Recall that we have fixed an enumeration of all blocks in $\A^{<\omega}$.)
%\[
%\left|\frac{e'(w_j,u)}{|u|}-\mu([w_j])\right|<\eps\quad\text{for }j=1,\ldots,n.
%\]
We say that a sequence $(u_n)_{n\in\N}$ of finite blocks $u_n\in\A^{<\omega}$ with $|u_n|\to\infty$ as $n\to\infty$ \emph{generates} a shift-invariant measure $\mu$ if for every $w\in\A^{<\omega}$ we have
\[
\lim_{n\to\infty}\frac{e'(w,u_n)}{|u_n|-|w|+1}=\mu([w]).
\]
%% {\color{blue} Since $|u_n|\to\infty$ as $n\to\infty$ and $w$ has finite length, the sequence $e'(w,u_n)/(|u_n|-|w|+1)$ has the same limit points as the sequence $e'(w,u_n)/|u_n|$. It follows that $u_n\in\A^{\omega}$ generates a shift-invariant measure $\mu$ if and only if for every $w\in\A^{<\omega}$
%% we have
%% \[
%% \lim_{n\to\infty}\frac{e'(w,u_n)}{|w|}=\lim_{n\to\infty}\frac{e'(w,u_n)}{|u_n|-|w|+1}=\mu([w]).
%% \]}
%Furthermore, if both limits exists, then they are equal.
Equivalently, a sequence $(u_n)_{n\in\N}$ of elements of $\A^{<\omega}$ generates a shift-invariant measure $\mu$ if for every $m\in\N$ and $\eps>0$ there is an $n_0$ such that $u_n$ is $(m,\eps)$-good for $\mu$ for every $n\ge n_0$.

For $x\in\A^\omega$, $N\ge 1$, and $w\in\A^k$ we clearly have
\begin{equation}\label{}
e'(w,x_{[0,N)})\le e(w,x,N) \le e'(w,x_{[0,N)})+k-1,
\end{equation}
where $x_{[0,N)}=x_0x_1\ldots x_{N-1}$. It follows that $x\in\A^\omega$ is a generic point for a shift-invariant measure $\mu$ if and only if
the sequence $(x_{[0,N)})_{N\in\N}$ generates $\mu$.

For further reference note that for every $u,v,w\in\A^{<\omega}$ the following holds
\begin{equation}\label{ineq:concat}
e'(w,v)\le e'(w,u)+e'(w,v)\le e'(w,uv) \le e'(w,u)+e'(w,v)+|w|-1.
\end{equation}

\begin{defn}
A sequence $(u_n)_{n\in\N}$ of elements of $\A^{<\omega}$ is
\begin{itemize}
  \item \emph{dominating} if the sequence $(|u_1|+\cdots+|u_n|)/|u_{n+1}|$ converges monotonically to $0$ as $n\to\infty$,
  \item \emph{asymptotically stable} for a shift-invariant measure $\mu$ if for every $\eps>0$ and $m\in\N$ %$w\in\A^{\omega}$
  there is $N\in\N$ such that for every $n> N$ there is some $\ell'<|u_n|$ so that $\ell'/|u_{n-1}|<\eps$ and for every $\ell'\le \ell\le|u_n|$ the restriction of $u_n$ to the first $\ell$ letters is $(m,\eps)$-good for $\mu$.
\end{itemize}
\end{defn}
\begin{lem}\label{lem:generating}
If a sequence $(u_n)_{n\in\N}$ of elements $\A^{<\omega}$ is dominating and asymptotically stable for a shift-invariant Borel probability measure $\mu$, then $(u_n)_{n\in\N}$ generates $\mu$ and  the point $x=u_1u_2u_3\ldots$ is generic for $\mu$.
  \end{lem}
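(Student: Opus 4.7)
The first assertion that $(u_n)_{n\in\N}$ generates $\mu$ is essentially immediate: fix $m$ and $\eps>0$; asymptotic stability applied with $\ell=|u_n|$ says that for all sufficiently large $n$, the whole block $u_n$ is $(m,\eps)$-good. This is precisely the equivalent characterization of generating $\mu$ given right after the definition. So the real content is showing that $x=u_1u_2u_3\ldots$ is generic for $\mu$, which, using the observation that bridges the one-sided counts $e(w,x,N)$ and $e'(w,x_{[0,N)})$ (display~\eqref{ineq:concat} together with the bound right above it), reduces to showing that the sequence $(x_{[0,N)})_{N\in\N}$ generates $\mu$.

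Fix $m\in\N$, $\eps>0$ and any block $w\in\A^{<\omega}$ of length $\le m$. Let $S_n=|u_1|+\cdots+|u_n|$. I will use asymptotic stability to pick $N_1$ so that for each $n>N_1$ there is $\ell'_n$ with $\ell'_n/|u_{n-1}|<\eps$ and such that every prefix of $u_n$ of length $\ge\ell'_n$ is $(m,\eps)$-good, and the dominating property to pick $N_2$ so that $S_{n-1}/|u_n|<\eps$ for $n>N_2$. The dominating property actually forces $|u_n|$ to grow at least geometrically (since $|u_{n+1}|>2S_n\ge 2|u_n|$ eventually), hence $n/S_n\to 0$, which will be used to absorb error terms of the form $n|w|/N$.

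Given $N$ large, choose $n$ with $S_n\le N<S_{n+1}$ and write $x_{[0,N)}=u_1\cdots u_n v$, where $v$ is a prefix of $u_{n+1}$ of length $\ell=N-S_n$. Iterating the concatenation inequality~\eqref{ineq:concat} I obtain
\[
\Bigl| e'(w,x_{[0,N)}) - \sum_{i=1}^n e'(w,u_i) - e'(w,v) \Bigr| \le n(|w|-1).
\]
For $i\le N_1$ I use the trivial bound $e'(w,u_i)\le|u_i|$, while for $i>N_1$ the $(m,\eps)$-goodness of $u_i$ gives $|e'(w,u_i)-\mu([w])|u_i||\le\eps|u_i|$. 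Summing,
\[
\Bigl|\sum_{i=1}^n e'(w,u_i)-\mu([w])S_n\Bigr|\le S_{N_1}+\eps S_n.
\]
I then split into two cases according to whether $\ell\ge\ell'_{n+1}$ or $\ell<\ell'_{n+1}$. In the first case $v$ itself is $(m,\eps)$-good, so $|e'(w,v)-\mu([w])|v||\le\eps\ell$, and combining with the previous display bounds $|e'(w,x_{[0,N)})-\mu([w])N|$ by $S_{N_1}+\eps N+n(|w|-1)$. In the second case $\ell<\ell'_{n+1}<\eps|u_n|\le\eps S_n\le\eps N$, so the contribution of $v$ is negligible and the same kind of bound holds. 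Dividing through by $N$, the terms $S_{N_1}/N$ and $n|w|/N$ tend to $0$ as $N\to\infty$ (the latter by the geometric growth observation), leaving an error of order $\eps$. Since $\eps>0$ and $w$ were arbitrary, $(x_{[0,N)})_N$ generates $\mu$, so $x$ is generic.

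The main obstacle is bookkeeping: one must simultaneously handle the ``old'' prefix $u_1\cdots u_{N_1}$ (not controlled by asymptotic stability), the ``bulk'' blocks $u_{N_1+1},\ldots,u_n$ (each $(m,\eps)$-good), and the terminal fragment $v$, whose size relative to $u_{n+1}$ dictates which estimate is available. The dominating property is what makes all three contributions fit together, by forcing $|u_n|$ to outgrow $S_{n-1}$ and thereby ensuring that the final long block $u_n$ (and, when applicable, the prefix $v$ of $u_{n+1}$) statistically dominates everything that came before it.
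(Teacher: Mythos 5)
Your argument is correct, and the overall skeleton matches the paper's: reduce genericity of $x$ to showing that every long prefix $x_{[0,N)}$ is approximately $(m,\eps)$-good, write $x_{[0,N)}=u_1\cdots u_n v$ with $v$ a prefix of $u_{n+1}$, and split on whether $|v|\ge \ell'_{n+1}$ (so $v$ is itself good by asymptotic stability) or $|v|<\ell'_{n+1}<\eps|u_n|$ (so $v$ is negligible). Where you genuinely diverge is in the treatment of the complete-block part $U_n=u_1\cdots u_n$. The paper applies \eqref{ineq:concat} only once, to $U_n=U_{n-1}u_n$, and uses the dominating hypothesis ($|U_{n-1}|/|u_n|\to 0$, $|U_n|/|u_n|\to 1$) to conclude that the statistics of $U_n$ coincide in the limit with those of its last block $u_n$ alone; domination carries the whole weight of that step. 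You instead iterate \eqref{ineq:concat} across all $n+1$ pieces and do additive bookkeeping: the finitely many "old" blocks contribute at most $S_{N_1}$, each bulk block contributes an error $\le\eps|u_i|$ by its own $(m,\eps)$-goodness, and the $n(|w|-1)$ boundary term is absorbed because $n/S_n\to 0$. This is a cleaner separation of roles, and it incidentally shows that the dominating hypothesis is doing very little in your version --- your geometric-growth observation is overkill, since $n/S_n\to 0$ already follows from $|u_n|\to\infty$ by the Ces\`aro argument, and the bound $\ell'_{n+1}<\eps|u_n|$ in your second case comes from asymptotic stability rather than domination. One cosmetic point: $(m,\eps)$-goodness is defined relative to the first $m$ blocks $w_1,\dots,w_m$ in the fixed enumeration, not all blocks of length $\le m$; you should fix $w$ first and then take $m$ large enough that $w$ appears among $w_1,\dots,w_m$, as the paper does. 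This does not affect the argument.
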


\begin{proof} It is clear that $|u_n|\to\infty$ as $n\to\infty$. The definition of asymptotic stability implies immediately that
$(u_n)$ generates $\mu$. Let %$L_n=|u_1|+\cdots+|u_n|$ and
$U_n=u_1u_2\ldots u_n$ for $n\ge 1$.
Applying \eqref{ineq:concat} to $U_n=U_{n-1}u_n$ we have for every $w\in\A^{<\omega}$ that
\[
%\frac{e'(w,U_{n-1})}{|U_{n}|}+
\frac{e'(w,u_n)}{|u_n|}\frac{|u_n|}{|U_{n}|}\le \frac{e'(w,U_n)}{|U_{n}|} \le %\frac{e'(w,U_{n-1})}{|U_{n}|}+
\frac{|U_{n-1}|}{|u_n|}+\frac{e'(w,u_n)}{|u_n|}%\frac{|u_n|}{|U_{n}|}
+\frac{|w|-1}{|u_{n}|}.
\]
Taking into account that the sequence $(u_n)$ is dominating, so $|U_{n-1}|/|u_n|$ goes to $0$
and $|U_n|/|u_n|$ converges to $1$ as $n\to\infty$, we have for every $w\in\A^{<\omega}$ that
\[
\lim_{n\to\infty}\frac{e'(w,U_n)}{|U_n|}=\lim_{n\to\infty}\frac{e'(w,u_n)}{|u_n|}=\mu([w]).
\]
It remains to show that $x$ is generic for $\mu$. It is enough to show that for every $m\in\N$ and $\eps>0$ we can find $K>0$ so that $x_{[0,k)}$ is $(m,\eps)$-good for all $k\ge K$.

%we have
%\[
%\lim_{k\to\infty}\frac{e'(w,x_{[0,n_k)})}{n_k}=\mu([w]).
%\]

To this end fix $w\in\A^{<\omega}$ and consider the initial subblock $x_{[0,k)}$ of $x$. It follows that for all sufficiently large
$k$ we can write $x_{[0,k)}=U_n v$ for some $n\in\N$ and a proper subblock $v$ of $U_{n+1}$. Pick $\eps>0$ and $m$ large enough for $w$ to be among $w_1,\ldots,w_m$. Use $m$ and $\eps/2$ to find $N$ as in the definition of asymptotic stability and assume that $k$ is large enough so that the $n$ for which $x_{[0,k)}=U_n v$ is strictly greater than $N$. For that $n$ we can find $\ell'$ as in the definition of asymptotic stability. We have two cases to consider. First, if $|v|<\ell'$, then using \eqref{ineq:concat} we get
\[
e'(w,U_n)\le e'(w,U_n v)\le e'(w,U_n)+ |v|+|w|-1.
\]
It follows that
\begin{equation}\label{ineq:less_ell}
\frac{e'(w,U_n)}{|U_n|}\frac{|U_n|}{|U_n v|}\le \frac{e'(w,U_n v)}{|U_n v|}\le \frac{e'(w,U_n)}{|U_n|}+ \frac{\ell'+|w|-1}{|U_n|}.
\end{equation}
Since $U_n$ is $(m,\eps/2)$-good for $\mu$ we can use \eqref{ineq:less_ell} with \eqref{ineq:me-good} to get
\begin{equation}\label{ineq:less_ell2}
(\mu([w])-\eps/2)\frac{|U_n|}{|U_n v|}\le \frac{e'(w,U_n v)}{|U_n v|}\le \mu([w])+\eps/2+ \frac{\ell'+|w|-1}{|U_n|}.
\end{equation}
Now the left hand side of \eqref{ineq:less_ell2} satisfies
\[(\mu([w])-\eps/2)\frac{|U_n|}{|U_n v|}\ge\mu([w])\left(1-\left(1-\frac{|U_n|}{|U_n v|}\right)\right)-\eps/2\ge \mu([w])-\eps/2-\frac{|v|}{|U_nv|}. \]
Plugging that into \eqref{ineq:less_ell2} we obtain
\begin{equation}\label{ineq:less_ell3}
 \mu([w])-\eps/2-\frac{|v|}{|U_nv|} \le \frac{e'(w,U_n v)}{|U_n v|}\le \mu([w])+\eps/2+ \frac{\ell'+|w|-1}{|U_n|}.
\end{equation}
In the second case $|v|\ge \ell'$, which implies that $v$ is $(m,\eps/2)$-good for $\mu$. By \eqref{ineq:concat} we obtain
\begin{equation}\label{ineq:uv}
e'(w,U_n)+e'(w,v)\le e'(w,U_n v)\le e'(w,U_n)+ e'(w,v)+|w|-1.
\end{equation}
Being $(m,\eps/2)$-good for $\mu$ (see \eqref{ineq:me-good}) means that
\begin{equation}\label{ineq:u_n}
\mu([w])|U_n|-\eps|U_n|/2< e'(w,U_n) < \mu([w])|U_n|+\eps|U_n|/2
\end{equation}
and
\begin{equation}\label{ineq:v}
\mu([w])|v|-\eps|v|/2< e'(w,v) < \mu([w])|v|+\eps|v|/2.
\end{equation}
Applying \eqref{ineq:u_n} and \eqref{ineq:v} to \eqref{ineq:uv} we obtain that
\begin{equation}\label{ineq:more_ell}
\mu([w])-\eps/2<\frac{ e'(w,U_nv)}{|U_n|+|v|} <\mu([w])+\eps/2+\frac{ |w|-1}{|U_n|+|v|}.
\end{equation}
Now, \eqref{ineq:less_ell3} and \eqref{ineq:more_ell} imply that for all sufficiently large $n$ the block $U_nv$ is $(m,\eps)$-good for $\mu$.
\end{proof}

%for each $w\in\A^{<\omega}$ we have
%\[
%\lim_{n\to\infty}\frac{e'(w,x_{[0,n)})}{n}=\lim_{n\to\infty}\frac{e'(w,x_{[0,n)})}{n-|w|+1}=\mu([w]).
%\]

%We say that two sequences of finite blocks $(u_n)$ and $(v_n)$ in $\A^{\omega}$ with $|u_n|,\,|v_n|\to\infty$ as $n\to \infty$ are statistically equivalent %if
%\[
%\lim_{n\to\infty}\frac{|v_n|}{|w_n|}= 1 \text{ and }\lim_{n\to\infty}\left|\frac{e'(w,u_n)}{|u_n|}-\frac{e'(w,v_n)}{|v_n|}\right|=0\text{ for every %$w\in\A^{\omega}$.}
%\]
%It is easy to see that if sequences of finite blocks $(u_n)$ and $(v_n)$ in $\A^{\omega}$ with $|u_n|,\,|v_n|\to\infty$ as $n\to \infty$ are statistically %equivalent, then $(u_n)$ generates a shift-invariant measure $\mu$ if and only if $(v_n)$ generates $\mu$.

Let $d_H$ stand for the normalised Hamming distance, that is, given two blocks $u=u_1\ldots u_n$ and $w=w_1\ldots w_n$ of equal length we set $d_H(u,w)=|\{1\le j\le n: u_j\neq w_j\}|/n$.

\begin{lem}\label{lem:dbar} Suppose $x,y\in\A^\omega$ and $x\in G_\mu$ for a shift-invariant Borel probability measure $\mu$ on $\A^\omega$.
\begin{enumerate}[label=(\alph*)]
  \item\label{dbar-a} If $\dbar(x,y)=\dbar\left(\{j\in\omega: x_j\neq y_j\}\right)=0$,
then $y\in G_\mu$.
\item\label{dbar-b} If $y=x_{i_0}x_{i_1}x_{i_2}\ldots$ where $(i_j)_{i\in\omega}$ is a strictly increasing sequence of elements of $\omega$ such that
$\dbar\left(\omega\sm\{i_j:j\in\omega\}\right)=0$,
then $y\in G_\mu$ iff $x \in G_\mu$.
\item\label{dbar-c}
Let $x=u_1u_2u_3\ldots$, $y=v_1v_2v_3\ldots$, where $(u_n)_{n\in\N}$ and $(v_n)_{n\in\N}$ are sequences of blocks in $\A^{<\omega}$  satisfying $|u_n|=|v_n|$ for every $n\ge 1$. If
$d_H(u_n,v_n)\to 0$ as $n\to\infty$, and
\begin{equation}
\frac{|u_{n+1}|\cdot d_H(u_{n+1},v_{n+1})}{|u_1|+\ldots+|u_n|}\to 0 \quad\text{as $n\to \infty$},\label{eq:dbar-stab}\end{equation}
then $\dbar(x,y)=\dbar\left(\{j\in\omega: x_j\neq y_j\}\right)=0$.
\item\label{dbar-d} For every $m\in\N$ and $\eps>0$ there exists $\delta>0$ such that if $w\in\A^n$ is $(m,\eps/2)$-good and $w'\in\A^n$ satisfies
$d_H(w,w')<\delta$, then $w'$ is $(m,\eps)$-good.
\end{enumerate}
\end{lem}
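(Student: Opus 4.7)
All four parts are elementary counting arguments that estimate how block frequencies change under a small perturbation of the underlying sequence. The unifying observation is that a single altered position affects at most $|w|$ potential occurrences of a block $w$ (those starting in the window $[j-|w|+1,j]$), so any error in block counts is controlled by $|w|$ times the number of altered positions.

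\textbf{Parts (a) and (d).} For \ref{dbar-a}, let $D=\{j:x_j\neq y_j\}$ and fix $w$ of length $k$. The observation above gives $|e(w,x,N)-e(w,y,N)|\le k\cdot|D\cap[0,N+k-1)|$, so dividing by $N$ and using $\dbar(D)=0$ yields $e(w,y,N)/N\to\mu([w])$ for every $w$, hence $y\in G_\mu$. For \ref{dbar-d} the same counting applied to $e'$ yields $|e'(w_j,w)/n-e'(w_j,w')/n|\le|w_j|\,d_H(w,w')$ for every $j\le m$; choosing $\delta=\varepsilon/(2\max_{j\le m}|w_j|)$ and combining with $(m,\varepsilon/2)$-goodness of $w$ gives $(m,\varepsilon)$-goodness of $w'$.

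\textbf{Part \ref{dbar-c}.} Set $L_n=|u_1|+\cdots+|u_n|$; the number of disagreements between $x$ and $y$ in $[0,L_n)$ is at most $\sum_{k=1}^n d_H(u_k,v_k)|u_k|$. For any $N\in[L_{n-1},L_n)$,
\[
\frac{|\{j<N:x_j\neq y_j\}|}{N}\le\frac{1}{L_{n-1}}\sum_{k=1}^{n-1}d_H(u_k,v_k)|u_k|+\frac{d_H(u_n,v_n)|u_n|}{L_{n-1}}.
\]
The second summand tends to $0$ by hypothesis~\eqref{eq:dbar-stab}. The first is a weighted average of the null sequence $d_H(u_k,v_k)$ with nonnegative weights $|u_k|/L_{n-1}$ summing to $1$, so it also tends to $0$ by a standard Ces\`aro-style argument (split into $k<K$ and $k\ge K$ with $d_H(u_k,v_k)<\varepsilon$, then let $n\to\infty$). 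Hence $\dbar(\{j:x_j\neq y_j\})=0$.

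\textbf{Part \ref{dbar-b}.} This is the main obstacle, since subsequence extraction does not preserve block structure in a naive way. Let $D=\omega\setminus\{i_j:j\in\omega\}$ and fix $w\in\A^k$. Call $\ell$ \emph{regular} if $i_{\ell+t}=i_\ell+t$ for $0\le t<k$; at a regular $\ell$, $w$ occurs at position $\ell$ in $y$ iff $w$ occurs at position $i_\ell$ in $x$. Each $d\in D$ renders at most $k$ values of $\ell$ irregular, so the number of irregular $\ell<N$ is at most $k\cdot|D\cap[0,i_{N-1}+k)|$. Since $|D\cap[0,i_N)|=i_N-N$, the hypothesis $\dbar(D)=0$ forces $i_N/N\to 1$, whence the above bound is $o(N)$ and
\[
e(w,y,N)=e(w,x,i_{N-1})+o(N).
\]
Combined with $i_{N-1}/N\to 1$ this proves $x\in G_\mu\Rightarrow y\in G_\mu$. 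For the converse, the same calculation shows that $\dbar(D)=0$ also forces $\max_{k\le N}(i_k-i_{k-1})=o(N)$ (a gap of size $\ge\varepsilon N$ at some $N$ would give $|D\cap[0,i_N)|\ge\varepsilon N\sim\varepsilon i_N$, contradicting density zero). Since $M\mapsto e(w,x,M)$ is $1$-Lipschitz, the limits of $e(w,x,M)/M$ along $(i_{N-1})_{N\in\N}$ and along all of $\N$ coincide, giving $y\in G_\mu\Rightarrow x\in G_\mu$.
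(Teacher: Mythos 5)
Your proposal is correct. Note that the paper itself only proves part \ref{dbar-c} (it cites \cite{KLO2, WeissBook} for \ref{dbar-a} and \ref{dbar-b} and calls \ref{dbar-d} straightforward), and your argument for \ref{dbar-c} is essentially the paper's: the same bound by $\sum_k d_H(u_k,v_k)|u_k|$ divided by $L_{n-1}$, split into an initial segment, a tail controlled by $d_H(u_k,v_k)<\eps$, and a last term controlled by \eqref{eq:dbar-stab}. Your proofs of \ref{dbar-a} and \ref{dbar-d} via the observation that one altered position affects at most $|w|$ occurrence windows are the standard arguments and are fine. Part \ref{dbar-b} is where you add genuine content beyond the paper, and the regular/irregular dichotomy together with the identity $|D\cap[0,i_M)|=i_M-M$ (hence $i_M/M\to 1$) does carry the proof in both directions; the only imprecision is your bound $k\cdot|D\cap[0,i_{N-1}+k)|$ on the number of irregular $\ell<N$ --- the relevant elements of $D$ lie below $i_{N+k-2}$, which can exceed $i_{N-1}+k$ when $D$ clusters there, so the correct bound is $k\cdot|D\cap[0,i_{N+k-2})|$. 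This does not affect the conclusion, since $|D\cap[0,i_{N+k-2})|=i_{N+k-2}-(N+k-2)=o(N)$ by density zero, so the count of irregular positions is still $o(N)$ and the rest of your argument, including the gap estimate $i_N-i_{N-1}=o(N)$ used for the converse, goes through unchanged.
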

\begin{proof}
The first two statements can be found in  \cite{KLO2, WeissBook}. The proof of the fourth
is straightforward. Let us prove the third one.
Fix $\eps>0$. Let $\ell_n=|u_1|+\ldots+|u_n|$ for $n\in\N$. Since $d_H(u_n,v_n)\to 0$ as $n\to \infty$, we find $K$ such that \begin{equation}
d_H(u_n,v_n)<\eps\quad\text{for all $n\ge K$}.
\label{eq:dbar-stab-2}
\end{equation}
Using \eqref{eq:dbar-stab} we obtain $N > K$ such that for every $n\ge N$ we have
\begin{equation}
|u_{n+1}|\cdot d_H(u_{n+1},v_{n+1})<\eps\cdot \ell_n.\label{eq:dbar-stab-3}
\end{equation}
We can pick $N$ so large that $\ell_K/\ell_N<\eps$ holds.
Then for $n> N$ and every $j$ such that $\ell_n\le j<\ell_{n+1}$ it holds that
\begin{multline*}
    \frac{
|\{0\le i<j:x_i\neq y_i\}|}{j}\le\frac{1}{\ell_n}\cdot\left( \sum_{k=1}^{n+1}
|u_k|\cdot d_H(u_k,v_k)\right)\\\le \frac{\sum_{k=1}^{K}
|u_k|}{\ell_n}+\frac{
\sum_{k=K+1}^{n}
|u_{k}|\cdot d_H(u_{k},v_{k})}{\ell_n}+\frac{|u_{n+1}|\cdot d_H(u_{n+1},v_{n+1})}{\ell_n}.
\end{multline*}
Now, note that the first term in the sum above equals $\ell_K/\ell_n$, and so is bounded by $\ell_K/\ell_N<\eps$. By  \eqref{eq:dbar-stab-3}, the third term in the sum above is also bounded by $\eps$. Finally, the same holds for the middle term, because $|u_{K+1}|+\ldots+|u_n|\le \ell_n$ and \eqref{eq:dbar-stab-2}. We have proved that for every $\eps>0$ and all sufficiently large $j$ it holds that
\[
\frac{
|\{0\le i<j:x_i\neq y_i\}|}{j}\le 3\eps,
\] which is what we wanted to show.
\end{proof}
%\begin{lem}
%If two sequences of finite blocks $(u_n)$ and $(v_n)$ in $\A^{\omega}$ with $|u_n|,\,|v_n|\to\infty$ as $n\to \infty$ satisfy
%$v_n=s_nu'_n$, where $s_n,u'_n\in\A^{<\omega}$ are such that $|s_n|=o(|u_n|)$, $|u_n|=|u'_n|$ and $d_H(u_n,u'_n)\to 0$ as $n\to \infty$,
%then $(u_n)$ and $(v_n)$ are statistically equivalent.
%\end{lem}

\subsection{Specification for subshifts}
For the general definition of the specification property we refer the reader to \cite{KLO}. We omit it here, as for shift spaces it has a simple combinatorial reformulation. The equivalence of these two definitions is an easy exercise.
\begin{defn}
A shift space $X$ over an at most countable alphabet $\A$
has the \emph{specification property}
if there is a nonnegative integer
$N$ such that if $w_i\in\lang(X)$ for $i=1,\dots,n$ then there are $v_i\in\A^N$ for $i=1,\dots,n-1$
such that $u=w_1v_1w_2v_2\ldots v_{n-1}w_n\in\lang(X)$. Furthermore, we say that $X$ has the \emph{periodic specification property} if, in addition to $v_i\in\A^N$ for $i=1,\dots,n-1$ as above we can take $v_n$ so that the periodic point $x=(w_1v_1w_2v_2\ldots w_nv_n)^\infty$ belongs to $X$.
\end{defn}

Note that if $X$ is a compact subshift, then the specification property and its periodic version are well known to be equivalent. Also, when $X$ is not compact  then the specification property
is no longer necessarily an invariant for topological conjugacy.

The classical specification property is much too strong for our purposes as it does not apply to most $\beta$-shifts.
It is then natural to replace it by a weaker assumption. Looking for such a notion we found out that no existing generalisation of the specification property is fully satisfactory. Therefore we introduce yet another property, which we coin the {\em right feeble specification property}. It is similar to the almost specification property, which was originally defined by
Pfister and Sullivan \cite{PS}, and later modified and renamed by Thompson \cite{Thompson}. The reader
may consult \cite{KLO} for discussion of this property. A variant of the latter property, the {\em right almost specification property}, was considered by Climenhaga and Pavlov (for more details we refer the reader to Definition 2.14 in \cite{CP}). We need a similar condition here to guarantee that the function we will define in the course of our proof of Theorem \ref{thm:fmt} is continuous.

\begin{defn}\label{def:rfs}
We say that a subshift $X$ has the \emph{right feeble specification}
property if there exists a set $\G\subseteq\lang(X)$ satisfying:
\begin{enumerate}
\item\label{cond:rfs_one} a concatenation of words in $\G$ stays in
    $\G$, that is, if $u,v\in\G$, then $uv\in\G$;
  \item\label{cond:rfs_two}
for any $\epsilon >0$ there is an $N=N(\epsilon)$ such that for every
$u\in \G$ and $v\in\lang(X)$ with $|v|\ge N$, there are $s,v'\in\A^{<\omega}$ satisfying $|v'|=|v|$, $0\le |s|\leq \epsilon |v|$,
$d_H(v,v')<\varepsilon$, and $usv' \in \G$.
\end{enumerate}
\end{defn}
%% It follows that words in $\G$ are dense in $\lang(X)$ with
%% respect to the Hamming distance, that is, for every $\eps>0$ there
%% is $N=N(\eps)$ such that if $n\ge N$ and $u\in\lang_n(X)$, then
%% there exists $v\in \G$ with $|v|=n$ and $d_H(u,v)<\eps$.

%\begin{defn}
%A subshift $X$ has the right almost specification property with mistake function
%$g\colon\N\to\N$ if:
%\begin{itemize}
%  \item $g(x)/n\to0$ as $n\to\infty$,
%  \item for any blocks $u,v$ in $\lang(X)$ there exits a word $v'\in\lang(X)$ satisfying $|v'|=|v|$, $d_H(v,v')<g(n)/n$, and $uv' \in \lang(X)$.
%\end{itemize}
%\end{defn}

%\begin{defn}
%A subshift $X$ has the {\em right almost specification property} if for every $\eps>0$ there exists $N=N(\eps)$ such that for any blocks $u,v$ in %$\lang(X)$ with $|v|\ge N$ there exits a word $v'\in\lang(X)$ satisfying $|v'|=|v|$, $d_H(v,v')<\eps$,  and $uv' \in \lang(X)$.
%\end{defn}

It is immediate that the right almost specification property implies the right feeble specification,
in particular the specification property %for a subshift over a finite or countable alphabet
implies the feeble specification property (cf.\ \cite[Lemma 2.15]{CP}). It is also easy to see that the weak specification property (see \cite{KLO,KOR}) implies the right feeble specification. We do not know if the weak specification property (or the right feeble specification property) implies the right almost specification property. We suspect that the answer to both questions is ``no'' and an appropriate example can be constructed within the family of subshifts with the weak specification property presented in \cite{KOR}.
\subsection{Irregular set}
Given $w\in\lang(X)$ we define $I_w(X)$ to be the set of all $x\in X$ such that the set of positions at which $w$ appears in $x$ does not have a frequency, that is
\[
\liminf_{N\to\infty}\frac{e(w,x,N)}{N}<\limsup_{N\to\infty}\frac{e(w,x,N)}{N}.
\]
Let $I(X)$ be the {\em irregular set} for $X$, that is, the union of sets $I_w(X)$ over all $w\in\lang(X)$. The {\em quasi-regular set} for $X$ is the complement of $I(X)$, that is, $Q(X)=X\setminus I(X)$.
%It is easy to see that $Q(X)$ is the union of sets $G_\mu$ over all shift invariant measures $\mu$ supported on $X$.
Both sets are obviously Borel and belong to the third level of the Borel hierarchy.

%% Let $\phi\colon X\to\R$ be a continuous function. Denote by $I_\phi$ the set of all points $x\in X$ such that the sequence of ergodic averages $(A_n(x,\phi))_{n\in\N}$ diverges, where
%% \[
%% A_n(x,\phi)=\frac{1}{n}\sum_{j=0}^{n-1}\phi(\sigma^j(x))=\int_X \phi\, d\Emp(x_{[0,n)}).
%% \]
%% The \emph{irregular set} $I(X)$ is the union of $I_\phi$ when $\phi$
%% runs over all continuous real-valued functions on $X$.  By the
%% Birkhoff ergodic theorem the irregular set is universally null:
%% $\mu(I(X))=0$ for every shift invariant measure $\mu$ on $X$.  The
%% complement of the irregular set, sometimes called the
%% \emph{quasi-regular set} of $X$ is the union of all sets $G_\mu$ of
%% $\mu$-generic points where $\mu$ runs over all shift invariant
%% measures on $X$. To see that observe that if $x\in I(X)$, then
%% certainly $x$ is cannot be generic for some $\mu$. On the other hand,
%% assume that $x\notin Q(X)$. This means that there are at least two
%% measure $\mu,\nu\in V(x)$. Since continuous real-valued functions on
%% $X$ separate the probability measures on $X$ we can find a continuous
%% map $\phi\colon X\to \R$ such that $\int\phi d\mu\neq\int\phi
%% d\nu$. It is now easy to see that $x\in I_\phi$.

\section{Main results} \label{sec:main}
\subsection{Subshifts with a feeble specification property}
Theorem~\ref{thm:fmt} below  %has two parts. The first
applies to %closed
subshifts
on a countable alphabet satisfying %while the second
%part applies more generally to $G_\delta$ subshifts.
%However, the second version requires the full periodic specification property while the first version
%uses
a hypothesis  weaker than the (non-periodic) specification property.
%The proofs are similar so we prove one version and briefly mention the modifications
%necessary for the other version.

Note that we are considering subshifts which are not necessarily compact. %, or even closed.
It forces us to {\em assume} %in the first part
that there are at least two shift-invariant Borel probability measures on $X$.
This condition is automatically fulfilled if $X$ is compact with $|X|\geq 2$ (granting the
specification hypothesis).

\begin{thm} \label{thm:fmt} Assume that $\A$ is at most countable and $X$ is a subshift over $\A$ with the right feeble specification property and at least two shift-invariant Borel probability measures.
If $\mu$ is a shift-invariant Borel probability measure on $X$, then every Borel set $B$ satisfying $G_\mu\subseteq B\subseteq Q(X)$, where $G_\mu$ is the set of generic points for $\mu$ and $Q(X)$ is the quasi-regular set, is $\bp_3^0$-hard. In particular, $B$ is
$\bP_3^0$-complete provided that $B$ is a $\bP_3^0$-set. Hence, $G_\mu$ and $Q(X)$ are $\bp^0_3$-complete,
and the irregular set $I(X)$ is $\bs^0_3$-complete.
\end{thm}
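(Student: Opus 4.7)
\emph{Upper bound.} Since $\lang(X)$ is countable, one has
\[G_\mu = \bigcap_{w\in\lang(X)}\bigcap_{k\ge1}\bigcup_{N_0\ge 1}\bigcap_{N\ge N_0}\left\{x:\left|\tfrac{e(w,x,N)}{N}-\mu([w])\right|\le\tfrac{1}{k}\right\},\]
a countable intersection of $F_\sigma$-sets, so $G_\mu\in\bp^0_3$; rewriting $Q(X)$ as the intersection over $w\in\lang(X)$ of the Cauchy condition on $(e(w,x,N)/N)_N$ gives $Q(X)\in\bp^0_3$ and $I(X)\in\bs^0_3$. Thus the ``in particular'' sentence of the theorem follows from the hardness claim.

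\emph{Hardness via Wadge reduction.} The plan is to produce a continuous map $f:(2^\omega)^\N\to X$ that reduces a standard $\bp^0_3$-complete set
\[\mathcal{C}=\{(A_k)_{k\in\N}\in(2^\omega)^\N:\forall k,\ A_k\text{ is eventually }0\}\]
to $B$, that is, $f(\mathcal{C})\subseteq G_\mu\subseteq B$ and $f((2^\omega)^\N\setminus\mathcal C)\subseteq I(X)\subseteq X\setminus B$. By hypothesis there is a second invariant measure $\nu\neq\mu$; pick a witness word $w_*\in\lang(X)$ with $\mu([w_*])\neq\nu([w_*])$. Applying the ergodic theorem to an ergodic component of each measure, together with the right feeble specification property, select for each $n\in\N$ a pair $U_n^\mu,U_n^\nu\in\G$ of common length $L_n$ so that $U_n^\mu$ is $(n,1/n)$-good for $\mu$ and $U_n^\nu$ is $(n,1/n)$-good for $\nu$; the growth rate of $L_n$ will be tuned below.

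\emph{Construction and verification.} Fix a bijection $n\leftrightarrow(k_n,m_n)\in\N\times\N$ such that each row $k$ is visited with prescribed positive lower density $d_k$, with $\sum_k d_k=1$ and $\sum_{k>K}d_k\to 0$. Given $\bar A\in(2^\omega)^\N$, put $\varepsilon_n:=A_{k_n}(m_n)\in\{0,1\}$ and define
\[f(\bar A)=U_1^{\varepsilon_1}\,s_1\,U_2^{\varepsilon_2}\,s_2\,U_3^{\varepsilon_3}\,s_3\,\cdots,\]
where each glue word $s_i$ (together with, if needed, a Hamming perturbation of $U_{i+1}^{\varepsilon_{i+1}}$ of magnitude $<1/i$) is supplied by Definition~\ref{def:rfs} with parameter $\epsilon=1/i$ and is chosen in a canonical (say lex-least) admissible fashion, inductively keeping all prefixes in $\G$. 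Each finite prefix of $f(\bar A)$ then depends on only finitely many $\varepsilon_n$, hence on only finitely many coordinates of $\bar A$, making $f$ continuous. For $\bar A\in\mathcal C$: for each $K$, only finitely many $n$ with $k_n\le K$ have $\varepsilon_n=1$, while $\{n:k_n>K\}$ has density $\sum_{k>K}d_k$, so $\dbar(\{n:\varepsilon_n=1\})=0$; with $L_n$ grown only polynomially, this transfers to vanishing position-weighted density of ``$\nu$-blocks'', and Lemma~\ref{lem:dbar}(d) absorbs the Hamming perturbations, so that prefixes of $f(\bar A)$ become $(m,\eps)$-good for $\mu$ for every $m\in\N$ and $\eps>0$; hence $f(\bar A)\in G_\mu$. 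For $\bar A\notin\mathcal C$: some $A_{k_0}$ has infinitely many $1$'s, so the positive-density subsequence $\{n:k_n=k_0\}$ of $\varepsilon_n$'s contains both infinitely many $1$'s and infinitely many $0$'s; sampling prefixes ending just after long $\mu$- respectively $\nu$-blocks along these moments yields $\liminf_N e(w_*,f(\bar A),N)/N<\limsup_N e(w_*,f(\bar A),N)/N$, placing $f(\bar A)$ in $I(X)$.

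\emph{Main obstacle.} The central difficulty is the simultaneous tuning of the block lengths $L_n$: they must (i)~grow fast enough to admit $(n,1/n)$-good blocks inside $\G$, (ii)~accommodate the shrinking $1/i$ error budgets of the feeble specification gluing and perturbation, and yet (iii)~grow slowly enough that genericity of $f(\bar A)$ is equivalent to a \emph{density-zero} condition on $\{n:\varepsilon_n=1\}$ (a genuine $\bp^0_3$ property) rather than the stronger \emph{eventually-zero} condition (merely $\bs^0_2$)---this balance is what elevates the reduction from $\bs^0_2$-hardness to $\bp^0_3$-hardness. A secondary care is needed to verify that the canonical lex-least selection of the glue words and Hamming-perturbed blocks supplied by Definition~\ref{def:rfs} is truly continuous, which amounts to showing that admissibility of a proposed pair $(s_i,v')$ is decided by bounded look-ahead into the concatenated prefix already built.
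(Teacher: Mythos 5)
Your upper bound is fine and matches the paper's. The hardness argument, however, has a genuine gap at its core: the reduction from $\mathcal{C}=\{(A_k):\forall k,\ A_k\text{ eventually }0\}$ does not work, because membership in $\mathcal{C}$ is not equivalent to any density (or frequency) condition on the diagonal bit-sequence $(\varepsilon_n)$, and a one-bit-per-block encoding can only detect such conditions. Concretely, take $\bar A$ with $A_k\equiv 0$ for all $k\neq k_0$ and $A_{k_0}(m)=1$ exactly when $m$ is a power of $2$. Then $\bar A\notin\mathcal{C}$, yet $\{n:\varepsilon_n=1\}$ has upper density zero, so with your polynomially growing $L_n$ the weighted proportion of $\nu$-blocks in every prefix tends to $0$ and $f(\bar A)$ is generic for $\mu$ rather than irregular; the inclusion $f((2^\omega)^\N\setminus\mathcal{C})\subseteq I(X)$ fails. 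If instead you let $L_n$ dominate $L_1+\cdots+L_{n-1}$ so that a single $\nu$-block forces a $\nu$-like prefix (which is what ``sampling prefixes ending just after long blocks'' requires), then the forward inclusion fails: the array with $A_k(0)=1$ and $A_k(m)=0$ for $m\geq 1$ lies in $\mathcal{C}$ but yields infinitely many dominating $\nu$-blocks, so $f(\bar A)\notin G_\mu$. Your ``main obstacle'' paragraph correctly senses this tension, but no tuning of $L_n$ resolves it: the map $\bar A\mapsto(\varepsilon_n)$ destroys the row structure that makes $\mathcal{C}$ properly $\bp^0_3$, and what $f^{-1}(G_\mu)$ actually computes is a density-zero condition on $(\varepsilon_n)$, which is a different set from $\mathcal{C}$.

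The paper circumvents exactly this by reducing from $\mathcal{C}_3=\{\alpha\in\N^\N:\liminf_n\alpha(n)=\infty\}$ and encoding the \emph{value} $\alpha'(n)=\min\{n,\alpha(n)\}$ quantitatively into the block geometry: at stage $n$ the $\mu$-material has length $a_nb_n$ with $a_n=\alpha'(n)c_n$ and the $\nu$-material has length $c_nb_n$, so the proportion of $\nu$-material in stage $n$ is $1/(\alpha'(n)+1)$. This tends to $0$ precisely when $\liminf\alpha(n)=\infty$ (giving genericity for $\mu$ via the dominating/asymptotically-stable machinery of Lemma~\ref{lem:generating}), while if $\liminf\alpha(n)=M<\infty$ then along a subsequence the prefixes generate $\frac{1}{M+1}\nu+\frac{M}{M+1}\mu\neq\mu$, forcing irregularity. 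Two possible repairs for your approach: either adopt this value-encoding, or reduce instead from the set $\{y\in 2^\omega:\dbar(\{n:y_n=1\})=0\}$ (known to be $\bp^0_3$-complete, essentially by Ki--Linton), for which your ``density zero iff generic'' dichotomy is the right one. As a minor further point, your continuity worry about lex-least selection is a non-issue if, as in the paper, you simply fix one admissible choice of $s_i,u_i'$ at each inductive step as part of the construction, since each such choice depends only on finitely many coordinates of the input.
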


\begin{proof} %We first prove the theorem assuming that \ref{fmta} holds.
First, we note that under our assumptions the set of generic points is nonempty for every shift-invariant Borel probability measure on $X$.
The existence of a generic point for an \emph{arbitrary} shift-invariant measure follows from the
right feeble specification property and Corollary 22 in \cite{KLO2} (formally, the quoted result requires a stronger assumption,
but the proof remains the same when we just assume right feeble specification property).

Fix a shift-invariant Borel probability measure $\mu$ on $X$ and a Borel set $B$ such that $G_\mu\subseteq B\subseteq Q(X)$.
Let $\varepsilon\mapsto N(\varepsilon)$ be the function as implicitly defined for $X$ by Definition~\ref{def:rfs}.
In order to apply Wadge reduction, it suffices to find a Polish metric space $\mathcal{X}$,
a continuous function $\pi\colon \mathcal{X}\to X$ and a  $\bP_3^0$-complete set
$\mathcal{C}_3\subseteq\mathcal{X}$ such that
\[\pi^{-1}(G_\mu)=\pi^{-1}(B)=\pi^{-1}(Q(X))=\mathcal{C}_3 \]
and $\pi^{-1}(I(X))=\mathcal{X}\setminus\mathcal{C}_3$.

We take $\mathcal{X}=\N^\N$ with the topology of pointwise convergence, and choose $\cC_3\subseteq \N^\N$ to be the set of all functions
$\alpha\colon\N\to\N$ attaining any $n\in\N$ only finitely many times, that is,
\[
\mathcal{C}_3=\{\alpha\in\N^\N \colon \liminf_{n\to\infty}\alpha(n)=\infty\}.
\]
It is well-known that $\mathcal{C}_3$ is a $\bP_3^0$-complete set.

In order to define $\pi$ we fix a shift-invariant Borel probability measure $\nu\neq \mu$ on $X$.
Then we fix a $\mu$-generic point $x\in X$ and a $\nu$ generic point $z\in X$.

%and a generalization of
%Sigmund's theorem presented in Gelfert-Kwietniak \cite{GK}.

We will also need auxiliary $\omega$-valued sequences $(a_n)_{n\ge 0}$, $(b_n)_{n\ge 0}$, and $(c_n)_{n\ge 0}$ to be defined in a moment.
It is also convenient to introduce one more auxiliary sequence $(B_n)_{n\ge0}$, so that $B_0=0$ and $B_k=2(b_1+\ldots+b_k)$ for $k\ge 1$.
Given $\alpha\in\N^\N$ and using these sequences we define blocks $u_1,\,u_2,\,\ldots\in\lang(X)$ inductively, defining a group of cardinality $2b_n$ at one step, first for $1\le j \le 2b_1$, by
%so that setting $B_0=0$ and $B_n=B_{n-1}+b_n$ for $n\ge 1$ we have for every $n\ge 1$ that
\begin{equation} \label{eqn:u_1}
u_{j}=\begin{cases}
        x_{[0,a_1)}, & \mbox{if } 0<j\le b_1, \text{ and}\\
        z_{[0,c_1)}, & \mbox{if } b_1<j\le 2b_{1},
      \end{cases}
\end{equation}
and then, assuming that $u_1,\ldots,u_{i}$ have been defined where $i=2(b_1+\cdots+b_n)=B_n$ for some $n\ge 1$, we set
\begin{equation} \label{eqn:u_n}
u_{j}=\begin{cases}
        x_{[0,a_{n+1})}, & \mbox{if } B_n<j\le B_n+b_{n+1},\text{ and} \\
        z_{[0,c_{n+1})}, & \mbox{if }B_n+b_{n+1}<j\le B_n+2b_{n+1}.
      \end{cases}
\end{equation}
%where $B_0=0$ and $B_n=2(b_1+\ldots+b_n)$.
%\[
%u_{2j-1}=x_{[0,a_n)},\,u_{2j}=x_{[0,c_n)},\quad\text{for }B_{n-1}<j\le B_n.
%\]
%$c'_n=a_nb_n$ and
%\begin{equation} \label{eqn:u_n}
%u_1=(x_{[0,c'_1)}),\, u_2=z_{[0,c_1)},\,u_3=(x_{[0,c'_2)}),\, u_4=z_{[0,c_2)},\ldots.
%u_1=x_{[0,a_1b_1)},\, u_2=z_{[0,c_1)},\,u_3=x_{[0,a_2b_2)},\, u_4=z_{[0,c_2)},\ldots.
%u^{(1)}_1=x_{[0,a_1)},\,u_1^{(2)}=z_{[0,c_1)},\,\ldots,\,u^{(2b_1-1)}_1=x_{[0,a_1)},\, ,\,u^{(2b_1-1)}_1==z_{[0,c_1)},\,u_3=x_{[0,a_2b_2)},\, u_4=z_{[0,c_2)},\ldots.
%\end{equation}
We now want to
produce finite blocks $v_0$, $v_1$,  $v_2$,
$v_3,\dots$ in $\lang(X)$ so that all the concatenations $v_0v_1  v_2\cdots  v_n$ for $n\ge 1$ are in $\lang(X)$ and for each $j\ge 1$ the block $v_j$ is close (in an appropriate sense) to $u_j$.

To do so we apply the right feeble specification property inductively.
We start with an arbitrarily chosen $v_0\in\G$. %$v_1\in\G$ chosen corresponding to $u_1$ using Lemma \ref{lem:dbar}\ref{dbar-d}.
Assume that we have defined $v_1,\ldots,v_{j-1}$ for some $j\ge 1$. Then we use the right feeble specification property to obtain $v_j$ so that $v_1v_2\ldots v_{j}\in\G$ and we have $v_{j}=s_{j}u'_{j}$ where $u'_{j}$ has the same length as $u_{j}$, the length of $s_{j}$ is a tiny fraction of  $|u_{j}|$, and the Hamming distance $d_H(u'_{j},u_{j})$ is small.

%If $j=2n+1$ for some $n\ge 1$, then
%we can find $v_{2n+1}=(s^1_{2n+1} w_1)\cdots (s^b_{2n+1} w_b)$,
%where $b=b_{n+1}$, and $|w_j|=|a_{n+1}|$, $d_H(w_j, x_{[0,a_{n+1})})<\frac{1}{2^{2n+1}}$,
%and $|s^j_{2n+1}|<\frac{1}{2^{2n+1}} a_{n+1}$.
We will then set $\pi(\alpha)= \sigma^{|v_0|}(v_0v_1v_2\ldots)=v_1 v_2 v_3\cdots$. Note that $\pi(\alpha)\in X$ because $X$ is closed and shift invariant.
With the right choice of the $a_n$'s, $b_n$'s, and $c_n$'s
we will prove that the map $\alpha \mapsto \pi(\alpha)$ is the required reduction.

Now we will define our auxiliary sequences.
For $\alpha \in \N^\N$, let $\alpha'(n)=\min \{n, \alpha(n)\}$.
Let $(a_n)_{n\ge0}$, $(c_n)_{n\ge0}$ %$(a_n)_{n\ge0}$
be sequences of positive integers with $a_0=c_0=1$ growing so fast that for every
$n\in\N$ the following conditions hold:
\stepcounter{equation}
\begin{enumerate} [label=(\theequation\alph*)]
  \item\label{cond:a00} $a_{n}=\alpha'(n)c_n$,% 2^n \cdot (a_0+\ldots+a_{n-1})$,
  \item\label{cond:a0} $c_n/n>2^{2n}$,
  \item\label{cond:ai} $c_n> N(1/2^{2n})$,
  %\item\label{cond:ai} $a_n/ \alpha'_n > N(1/2^{2n})$,
  \item\label{cond:aii} for each $m\ge c_n$ the block $x_{[0,m)}$ is $(m,1/2^{n+1})$-good for $\mu$, and %m\ge (a_n\cdot\min \{n, \alpha(n)\})$
  \item\label{cond:aiii} for each $m\ge c_n$ the block $z_{[0,m)}$ is $(m,1/2^{n+1})$-good for $\nu$.
  %  \item\label{cond:aii} for each $1\le j \le n$ and each $m\in\N$ so that $m\ge \lfloor a_n/\alpha'_n \rfloor$ %m\ge (a_n\cdot\min \{n, \alpha(n)\})$
%  we have
%  \[\left|e(w_j,x_{[0,m)})/m-\mu([w_j])\right|<1/2^n,\]
%  \item\label{cond:aiii} for each $1\le j \le n$ and each $m\in\N$ so that $m\ge \lfloor a_n/\alpha'_n\rfloor$ %$m\ge (a_n\cdot\min \{n, \alpha(n)\})$
%  we have
%  \[\left|e(w_j,z_{[0,m)})/m-\mu([w_j])\right|<1/2^n.\]
\end{enumerate}
Now define $b_0=0$ and $(b_n)_{n\ge 1}$ to be a sequence of positive integers satisfying for every
$n\ge 1$ the following conditions:
%\begin{enumerate} [label=(2\alph*)]
%\begin{centering}
\stepcounter{equation}
\begin{enumerate} [label=(\theequation\alph*)]
\item\label{cond:b0} $b_n>2^{2n}$,
  \item\label{cond:bi} $a_nb_n> 2^{2n}a_{n+1}$, and
  \item\label{cond:bii} $a_n b_n>  2^{2n} ((a_1+c_1)b_1 +\cdots + (a_{n-1}+c_{n-1})b_{n-1})$.%\alpha'_n$.
\end{enumerate}
%\end{centering}
%
%\begin{equation} \label{eqn:b_n}
%a_n b_n> \max \{ 2^n a_{n+1}, 2^{n+1}(a_1b_1 +\cdots + a_{n-1}b_{n-1})\}.
%\end{equation}

%Finally, let $(c_n)_{n\ge 1}$ be defined by
%\begin{equation} \label{eqn:c_n}
%begin{enumerate} [label=(3)]
%\item\label{cond:c} $c_n=\big\lfloor a_n/\alpha'_n \big\rfloor$.
%\end{equation}
%\end{enumerate}

Equations~\eqref{eqn:u_1} and \eqref{eqn:u_n} now define the blocks $u_n$ for $n\ge 1$.
For $n\ge 1$ let
\begin{align*}
\bar{u}'_n  &=(x_{[0,a_{n})})^{b_n}=\underbrace{x_{[0,a_{n})}\ldots x_{[0,a_{n})}}_{b_n\text{ times}}, \text{ and}\\
\bar{u}''_n &=(z_{[0,c_{n})})^{b_n}=\underbrace{z_{[0,c_{n})}\ldots z_{[0,c_{n})}}_{b_n\text{ times}}.
\end{align*}
Note that $\bar{u}'_n$ is the concatenation of the $u_i$'s where $i$ runs from
$B_{n-1}+1$ to $B_{n-1}+b_n$, and $\bar{u}''_n$ is the concatenation of the $u_i$'s where $i$ runs from
$B_{n-1}+b_n+1$ to $B_n=B_{n-1}+2b_n$ for each $n\ge1$. It follows that the points $u_1u_2u_3\ldots\in\A^\omega$ and $\bar{u}'_1\bar{u}''_1\bar{u}'_2\bar{u}''_2\ldots$ are equal.

We claim that:
\begin{enumerate}[label=(\Alph*)]
\item \label{claim:A} If $\alpha\in\mathcal{C}_3$, then $(\bar{u}'_n\bar{u}''_n)$ is dominating and
  an asymptotically stable sequence for $\mu$, which implies by
  Lemma~\ref{lem:generating} that the point $x=u_1u_2u_3\ldots$ is generic for $\mu$.
  \item \label{claim:B} If $\alpha\notin\mathcal{C}_3$, then the sequence $(U'_n)$ given by
  \[
  U'_n=\bar{u}'_1\bar{u}''_1\bar{u}'_2\bar{u}''_2\ldots\bar{u}'_{n-1}\bar{u}''_{n-1}\bar{u}'_n
  \]
  generates $\mu$ and the sequence $(U''_n)$ given by
  \[
  U''_n=\bar{u}'_1\bar{u}''_1\bar{u}'_2\bar{u}''_2\ldots\bar{u}'_n\bar{u}''_n=U'_n\bar{u}''_n
  \]
generates along some subsequence a measure $\nu'$, which is a (nontrivial) convex combination of $\mu$ and $\nu$. Since $\nu'\neq \mu$, we see that $x=u_1u_2u_3\ldots$ is an irregular point.
\end{enumerate}

\begin{proof}[Proof of  Claim \ref{claim:A}]
 Assume that $\alpha\in\mathcal{C}_3$. We first prove the following claim:
\begin{enumerate}[label=(\Alph*')]
  \item \label{claim:Ai} For each $m\in\N$ and $\eps>0$ the first $\ell$ symbols of the block $\bar{u}'_n$ are $(m,\eps)$-good for every sufficiently large $n\ge m$ and $\ell\ge \ell'=2^{n}a_n$.
\end{enumerate}
To see that take any $n\ge m$ and recall that $\bar{u}'_n=(x_{[0,a_n)})^{b_n}$ and $b_n>2^n$ by \ref{cond:b0}. Hence we can consider $2^na_n=\ell'\le \ell < |\bar{u}'_n|=a_nb_n$ and write $\bar{u}'_n$ restricted to the first $\ell$ symbols as a concatenation $\tilde{u}'_n\tilde{u}''_n$ where $\tilde{u}'_n=(x_{[0,a_n)})^{r}$, $r=\lfloor \ell/a_n\rfloor\ge 2^n$ and $|\tilde{u}''_n|<a_n$. We have
for every $1\le j\le m$ that
\[
ra_ne'(w_j,x_{[0,a_n)})\le e'(w_j,\tilde{u}'_n\tilde{u}''_n) \le ra_ne'(w_j,x_{[0,a_n)}) + |\tilde{u}''_n|+|w_j|-1.
\]
%Dividing the above inequalities by $|\tilde{u}'_n\tilde{u}''_n|=\ell \le (r+1)a_n$ and using the fact that
%$|\tilde{u}''_n|+|w_j|\le (1/2^n)ra_n+m$
%we see that
Note that $|\tilde{u}''_n|+|w_j|\le (1/2^n)ra_n+m$ and $m/a_n<m/c_n<1/2^{2n}$ by \ref{cond:a00} and \ref{cond:a0}.
Now reasoning as in the proof of Lemma \ref{lem:generating} and using the fact \ref{cond:aii} that $x_{[0,a_n)}$ is $(m,\eps/2)$-good for $\mu$ for all sufficiently large $n$ %the above inequalities
we see that Claim \ref{claim:Ai} holds. To finish the proof of Claim \ref{claim:A} note that
\[|\bar{U}_n|=|\bar{u}'_n\bar{u}''_n|=(a_n+c_n)b_n=(\alpha'(n)+1)c_nb_n.\]
Therefore \ref{cond:bii} implies that $(\bar{U}_n)_{n\in\N}$ is a dominating sequence, and Claim \ref{claim:Ai} together with \ref{cond:bi} and the fact that $\alpha'(n)\to \infty$ as $n\to\infty$ imply that $(\bar{U}_n)_{n\in\N}$ is an asymptotically stable sequence, so we can apply Lemma \ref{lem:generating}.
\renewcommand*{\qedsymbol}{(\(\text{Claim \ref{claim:A} }\blacksquare\))}
\end{proof}

\begin{proof}[Proof of  Claim \ref{claim:B}]
\renewcommand*{\qedsymbol}{(\(\text{Claim \ref{claim:B} }\blacksquare\))}
Observe that Claim \ref{claim:Ai} and \ref{cond:bii} imply that the sequence $(U'_n)_{n\in\N}$, where $U'_n=\bar{u}'_1\bar{u}''_1\bar{u}'_2\bar{u}''_2\ldots\bar{u}'_{n-1}\bar{u}''_{n-1}\bar{u}'_n$, generates $\mu$.
We also have that there exists $M\in\N$ so that $\alpha'(n)=M$ for infinitely many $n$'s. Passing to a subsequence we can assume that this happens for all $n$. The same reasoning as in the proof of Claim \ref{claim:Ai} with \ref{cond:aiii} replacing \ref{cond:aii} yields that the sequence $(U''_n)_{n\in\N}$, where $U''_n=\bar{u}'_1\bar{u}''_1\bar{u}'_2\bar{u}''_2\ldots\bar{u}'_n\bar{u}''_n=U'_n\bar{u}''_n$, generates the measure $(1/(M+1)\nu+M/(M+1)\mu$, which implies that $x=u_1u_2u_3\ldots$ is an irregular point.
\end{proof}
\renewcommand*{\qedsymbol}{\(\square\)}
Unfortunately, we cannot take $\pi(\alpha)=x=u_1u_2u_3\ldots$ because $x$ need not  belong to $X$.
But given $x$ we can use the right feeble specification property to find the sequence of blocks $v_0,v_1,v_2,\ldots$ as outlined above so that $\pi(\alpha)=\sigma^{|v_0|}(v_0v_1v_2\ldots)=v_1v_2v_3\ldots\in X$ and our construction will allow us to use Lemma \ref{lem:dbar} to prove that $\pi(\alpha)$ behaves like $x$.

We start with an arbitrarily chosen $v_0\in\G$. Next we find $v_1$ such that $v_0v_1\in\G$ and $v_1=s_1u'_1$ where $|u'_1|=|u_1|$, $|s_1|<|u_1|/4$, and the Hamming distance $d_H(u_1,u'_1)$ is small (say, $d_H(u_1,u'_1)<1/4$). Note that using Lemma \ref{lem:dbar}\ref{dbar-d} and inequality \eqref{ineq:concat} (and increasing $a_1$ if necessary) we can assume that $v_1\in\G$ is almost as good for $\mu$ as $u_1$. Assume $v_0,v_1, v_2,\dots, v_i$ have been defined for some $i\ge 1$ so that  $v_0v_1v_2\ldots v_i\in\G$ and $B_{n-1}\le i< B_{n-1}+2b_{n}$.
Then the right feeble specification property gives us blocks $s_{i+1}$ and $u'_{i+1}$ such that
\[
|u'_{i+1}|=|u_{i+1}|=\begin{cases}
                   |x_{[0,a_n)}|=a_n, & \mbox{if } i<B_{n-1}+b_n,\text{ and} \\
                   |z_{[0,c_n)}|=c_n, & \mbox{if } B_{n-1}+b_n\le i,
                 \end{cases}
\]
and $|s_{i+1}|\le (1/2^{2n})\cdot |u_i|$ and $d_H(u_{i+1},u'_{i+1})<1/2^{2n}$ (we use here \ref{cond:a00} and \ref{cond:ai}).
We set $v_{i+1}=s_{i+1}u'_{i+1}$ and let
%It is convenient to introduce some more notation: for each $n\ge1$ we write $\bar{v}'_n$ for the concatenation of $v_i$'s where $i$ runs from
%$B_{n-1}+1$ to $B_{n-1}+b_n$, and by $\bar{v}''_n$ we denote the concatenation of $v_i$'s where $i$ runs from
%$B_{n-1}+b_n+1$ to $B_n=B_{n-1}+2b_n$, we also write $\bar{v}_n=\bar{v}'_n\bar{v}''_n$ and $\bar{V}_n=\bar{v}_1\ldots\bar{v}_n$. Then we have
$\pi(\alpha)=\sigma^{|v_0|}(v_0v_1v_2\ldots)=v_1v_2v_3\ldots$. Note that the right feeble specification property guarantees that $v_0v_1v_2\ldots v_iv_{i+1}\in\G$. Define
\[I'=\{j\in\omega:j<|s_1|\text{ or }\exists n\ge 1\text{ and }0<i\le |s_{n+1}|\text{ with }j=|v_1\ldots v_n|+i\}.\]
Since $|s_{n+1}|/|v_{n+1}|$ goes to $0$ as $n\to \infty$ we get that $\dbar( I')=0$.
Therefore Lemma \ref{lem:dbar}\ref{dbar-b} implies that $\pi(\alpha)$ is generic for $\mu$ if and only if $y=u'_1u'_2u'_3\ldots\in G_\mu$.
Putting $x=u_1u_2u_3\ldots$ and $y=u'_1u'_2u'_3\ldots$ into Lemma \ref{lem:dbar}\ref{dbar-c} we see that
$y\in G_\mu$ if and only if $x\in G_\mu$. Note that \eqref{eq:dbar-stab} holds because for
every $n\ge 2$ and $B_{n-1}\le i< B_{n-1}+2b_{n}$ we have
\[
\frac{|u_{i+1}|\cdot d_H(u_{i+1},u'_{i+1}) }{|u_1\ldots u_i|}\le \frac{a_n\cdot (1/2^{2n})}{|U_{n-1}''|}\le \frac{a_{n-1}b_{n-1}\cdot(1/2^{4n-2})}{(a_1+c_1)b_1 +\cdots + (a_{n-1}+c_{n-1})b_{n-1}},
\]
which clearly implies \eqref{eq:dbar-stab} (for the last inequality we have used \ref{cond:bi}
to bound the numerator). 
Similarly, we obtain that
$\pi(\alpha)\in I(X)$ if and only if $y\in I(\A^\omega)$ if and only if $x\in I(\A^\omega)$.
We conclude
$\pi^{-1}(G_\mu)=\pi^{-1}(Q(X))=\mathcal{C}_3$, and $\pi^{-1}(I(X))=\N^\N\setminus\mathcal{C}_3$.
These observations together with Claims \ref{claim:A} and \ref{claim:B} prove that  the map $\alpha \mapsto \pi(\alpha)$ is a reduction map showing
that $B$ is $\bP^0_3$-hard, and so in particular
$G_\mu$ and $Q(X)$ are $\bp^0_3$-complete and $I(X)$ is
$\bs^0_3$-complete, provided the map $\pi$  is continuous. But the continuity is obvious as each initial segment of $\pi(\alpha)$ depends only on $\alpha(1),\ldots,\alpha(n)$ for some $n\in\N$.
\end{proof}
\begin{rem}
Theorem \ref{thm:fmt} holds for any shift-invariant $G_\delta$ subset of $\A^\omega$ with the periodic specification property. The proof requires only a minor modification which we leave for the reader.
\end{rem}

\subsection{Hereditary Shifts} \label{subsec:hereditary}

In \S\ref{sec:app} we present a number of applications of Theorem~\ref{thm:fmt}
to normal numbers defined by using various expansions including $\beta$-expansions,
regular continued fraction expansions, and generalized L\"{u}roth series expansions.
In the remainder of this section we consider a result
which does not follow immediately as a corollary to Theorem~\ref{thm:fmt},
but whose proof uses the same techniques as the one for that theorem.
Namely, we show that the
conclusion of Theorem~\ref{thm:fmt} holds for the class of {\em hereditary shifts}.
Furthermore, we can use Theorem \ref{thm:her} instead of Theorem~\ref{thm:fmt} in the applications presented in \S\ref{sec:app}, because
every subshift considered there is hereditary and has a generic point for each of its invariant measures\footnote{But the proof of the latter fact is anyway based on the specification property.}. Actually, Theorem \ref{thm:her} is valid for an even broader class of subshifts
having a {\em safe symbol} (see \cite{RS}).
%This class includes the $\beta$-expansions, and so provides an alternate proof
%of the theorem in this case.

 Hereditary subshifts were introduced by Kerr and Li in \cite[p. 882]{KerrLi} (see also \cite{Kwietniak}). The family of hereditary subshifts includes extensively studied classes of subshifts: spacing shifts, $\beta$-shifts, bounded density shifts, $\mathscr{B}$-admissible shifts; also, many examples of $\mathscr{B}$-free shifts. Note also that all full shifts over $\{0,1,\ldots,n\}$ or $\omega$ are hereditary, as well as many sofic shifts and shifts of finite type (golden mean shift for example) (see Section 4 in \cite{KKK} for more details and references).

%Let $\MT(X)$ denote the set of $T$-invariant measures in $\M(X)$. We write $\MTe(X)$ for the set of all ergodic measures,
%and $h(\mu)$ denotes the Kolmogorov-Sinai entropy of $\mu$.

%Let $X\subseteq \A^\omega$ be a closed subshift on a finite or countable alphabet $\A$.
%Recall this measns that $T\colon X\to X$ is the shift map, and $X$ is a closed, invariant subset of
%$\A^\omega$.

\begin{defn}
A subshift $X \subseteq \A^\omega$ where $\A=\{0,1,\ldots, n\}$ or $\A=\omega$ is
\emph{hereditary} if $y \leq x$ coordinate-wise and $x \in X$ imply $y
\in X$.
\end{defn}

\begin{defn}
We say that $\gamma \in \A$ is a \emph{safe symbol} for a subshift $X$ over
$\A$ if for every $x\in X$ and $k\ge 0$ we have that the point $y$, where
\[
y_n=\begin{cases}
x_n,&\text{if }n\neq k, \text{ and}\\
\gamma,&\text{if }n=k,
\end{cases}
\]
also belongs to $X$.
\end{defn}

Note that by definition $0$ is a safe symbol for every hereditary subshift, and  a subshift over $\{0,1\}$  is hereditary if and only if $0$ is its safe symbol. It is easy to see examples of subshifts over $\{0,1,2\}$ which are not hereditary but have $0$ as a safe symbol. Shifts with a safe symbol seem to be more important in higher dimensional symbolic dynamics, see \cite{RS} and references therein. Note that we again need to {\em assume}
that there are at least two shift-invariant measures on $X$, as even compact hereditary shifts may have only one invariant measure.

\begin{thm} \label{thm:her}
Assume that $\A$ is at most countable and $X$ is a subshift over $\A$ with a safe symbol (in particular, if $\A=\{0,1,\ldots, n\}$ or $\A=\omega$ and $X$ is a hereditary shift) with more than one shift-invariant Borel probability measure. If $\mu$ is a shift-invariant Borel probability measure on $X$
such that $G_\mu\neq\emptyset$ and $B$ is a Borel set satisfying $G_\mu\subseteq B\subseteq Q(X)$, then $B$ is $\bp_3^0$-hard. In particular, $B$ is
$\bP_3^0$-complete provided that $B$ is a $\bP_3^0$-set. Hence, the set $G_\mu$ is either empty, or is
$\bp^0_3$-complete. In particular, the set $G_\mu$ is $\bp^0_3$-complete for every ergodic measure. Furthermore, $Q(X)$ is a $\bp^0_3$-complete set and $I(X)$ is a $\bs^0_3$-complete set.
\end{thm}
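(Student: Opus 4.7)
The plan is to adapt the proof of Theorem \ref{thm:fmt} by replacing the right feeble specification property with the safe symbol property. The key structural fact I would establish first is: if $\gamma$ is safe for $X$, then for any $w \in X$ and any $S \subseteq \omega$ the point $w^S$ defined by $w^S_n = \gamma$ for $n \in S$ and $w^S_n = w_n$ otherwise also lies in $X$; this follows by iterating the safe symbol axiom over an enumeration of $S$ and using closedness of $X$. In particular $\gamma^\infty \in X$, so $\delta_{\gamma^\infty}$ is an invariant Borel probability measure on $X$.

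In the main case $\mu \neq \delta_{\gamma^\infty}$ (equivalently $\mu([\gamma]) < 1$), I would fix $x \in G_\mu$ and choose auxiliary sequences $(a_n), (b_n), (c_n)$ subject to the growth conditions \ref{cond:a00}--\ref{cond:bii} used in the proof of Theorem \ref{thm:fmt}, with $\gamma^\infty$ taking the role of the $\nu$-generic point. For each $\alpha \in \N^\N$, I define $S(\alpha) \subseteq \omega$ so that within the $n$-th batch of total length $(a_n+c_n)b_n$ (partitioned into $b_n$ pairs, each consisting of an $x$-segment of length $a_n = \alpha'(n) c_n$ followed by a would-be $\gamma$-segment of length $c_n$), the positions belonging to $S(\alpha)$ are precisely the would-be $\gamma$-segments, occupying a fraction $1/(\alpha'(n)+1)$ of the batch. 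Put $\pi(\alpha) := x^{S(\alpha)}$. The structural fact gives $\pi(\alpha) \in X$, and continuity of $\pi$ is immediate because each coordinate of $\pi(\alpha)$ depends on only finitely many $\alpha(k)$.

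The analysis parallels Claims (A) and (B) in the proof of Theorem \ref{thm:fmt}. If $\alpha \in \mathcal{C}_3$, then $\alpha'(n) \to \infty$ and the growth conditions force $\dbar(S(\alpha)) = 0$; since $\{j : \pi(\alpha)_j \neq x_j\} \subseteq S(\alpha)$, Lemma \ref{lem:dbar}\ref{dbar-a} yields $\pi(\alpha) \in G_\mu$. If $\alpha \notin \mathcal{C}_3$, pick $M$ with $\alpha'(n) = M$ for infinitely many $n$; along this subsequence the density of $S(\alpha)$ in the prefix ending at the $n$-th batch is $\sim 1/(M+1)$, while along a subsequence with $\alpha'(n) \to \infty$ it tends to $0$. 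Because $\mu([\gamma]) < 1$, repeating the argument of Claim~(B) shows that the prefixes of $\pi(\alpha)$ generate $\mu$ along the latter subsequence and the nontrivial mixture $\frac{1}{M+1}\delta_{\gamma^\infty} + \frac{M}{M+1}\mu$ along the former; these two measures differ on $[\gamma]$, so $\pi(\alpha) \in I(X)$. Therefore $\pi^{-1}(G_\mu) = \pi^{-1}(B) = \pi^{-1}(Q(X)) = \mathcal{C}_3$ for every Borel $B$ with $G_\mu \subseteq B \subseteq Q(X)$, which is the required Wadge reduction.

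The main obstacle is the degenerate case $\mu = \delta_{\gamma^\infty}$: every $x \in G_\mu$ already has $\gamma$-density $1$, so zeroing further coordinates cannot produce oscillating block frequencies. Here I would use the hypothesis of more than one invariant measure to pick $\nu \neq \mu$, note $\nu([\gamma]) < 1$, and apply the Birkhoff ergodic theorem (to an ergodic component of $\nu$ distinct from $\delta_{\gamma^\infty}$) to produce $z \in X$ whose set $T$ of non-$\gamma$ coordinates has positive upper density. Rerunning the construction with $z$ in place of $x$ and redefining $S(\alpha)$ so that on the $n$-th batch it either covers $T$ entirely (erasing all non-$\gamma$ content there) or avoids $T$ (leaving $\pi(\alpha)$ equal to $z$ on that batch), the choice being governed by whether $\alpha'(n)$ is large, the identical Wadge argument shows $\pi(\alpha) \in G_{\delta_{\gamma^\infty}} = G_\mu$ when $\alpha \in \mathcal{C}_3$ and $\pi(\alpha) \in I(X)$ otherwise.
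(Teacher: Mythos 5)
Your overall architecture matches the paper's: a Wadge reduction from $\mathcal{C}_3=\{\alpha:\liminf\alpha(n)=\infty\}$, a point of $X$ obtained from a generic point by overwriting a controlled-density set of coordinates with the safe symbol, Lemma \ref{lem:dbar}\ref{dbar-a} for the direction $\alpha\in\mathcal{C}_3$, and a separate treatment of the degenerate case $\mu=\delta_{\gamma^\infty}$. The $\alpha\in\mathcal{C}_3$ direction of your argument is fine. The gap is in the direction $\alpha\notin\mathcal{C}_3$: you erase \emph{pre-designated intervals} (the ``would-be $\gamma$-segments''), and there is no guarantee that this changes the empirical statistics of $x$ at all. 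Genericity of $x$ controls block frequencies over prefixes $[0,N)$, not over the union of the erased windows $[t,t+c_n)$ inside a batch; in particular $x$ may already equal $\gamma$ on most or all of the erased positions (this is consistent with $\mu([\gamma])<1$ whenever $1/(M+1)\le\mu([\gamma])$), in which case the erasure is the identity on that batch and no oscillation is produced. For the same reason your claim that the prefixes generate the mixture $\frac{1}{M+1}\delta_{\gamma^\infty}+\frac{M}{M+1}\mu$ does not follow from ``the argument of Claim (B)'': that argument depends on the inserted segments being \emph{prefixes} of generic points, whose statistics are controlled. The paper's fix is exactly at this spot: it fixes a non-safe symbol $\gamma$ with $\mu([\gamma])>0$, lets $I_n$ be the set of positions in $[b_{2n-1},b_{2n})$ where $x$ \emph{actually carries} $\gamma$, and overwrites a $1/\beta(n)$-fraction of $I_n$ with the safe symbol. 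This guarantees that the count of $\gamma$'s drops by a definite fraction, so the frequency of the single cylinder $[\gamma]$ oscillates between $\approx\mu([\gamma])$ (at the ends of the untouched even blocks) and $\approx\mu([\gamma])(1-1/K)$ along the bad subsequence.

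Your degenerate case has a second, independent problem: the all-or-nothing rule ``erase the $n$-th batch iff $\alpha'(n)$ is large'' breaks down for the constant sequence $\alpha\equiv M\notin\mathcal{C}_3$. Depending on the threshold, $\pi(\alpha)$ then agrees (up to a density-zero set) either with $\gamma^\infty$ (hence lies in $G_\mu$) or with the $\nu$-generic point $z$ (hence lies in $G_\nu\subseteq Q(X)$); either way $\pi(\alpha)\in Q(X)$ while $\alpha\notin\mathcal{C}_3$, so the reduction fails. The paper instead keeps the rule fractional in this case too: $y$ is the safe symbol everywhere except on a $1/\beta(n)$-fraction of the $\gamma$-positions of $z$ inside the odd blocks, so that for constant $\beta\equiv K$ the $\gamma$-frequency of $y$ oscillates between $0$ and $\approx\nu([\gamma])/K>0$.
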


\begin{proof} As in the proof of Theorem~\ref{thm:fmt}, we are going to define a continuous reduction $\pi\colon \N^\N\to X$ with
$\pi^{-1}(G_\mu)=\pi^{-1}(Q(X))=\mathcal{C}_3=\{\beta\in\N^\N \colon \liminf_{n\to\infty}\beta(n)=\infty\}$ and $\pi^{-1}(I(X))=\N^\N\setminus \mathcal{C}_3$.
Without loss of generality we assume that $0$ is a safe symbol for $X$.
By $\delta_\mathbf{0}$ we denote the
Dirac measure concentrated on $0^\infty=000\ldots\in X$. Let $\mu$ be any shift-invariant measure on $X$.
Suppose first that $\mu\neq\delta_\mathbf{0}$, that is, $\mu$ is not
supported on $\{0^\infty\}$. Then $\mu([\gamma] )> 0$ for some $\gamma
\in \{1, \cdots, n-1\}$ by the invariance of $\mu$. Assume that $G_\mu$ is nonempty and take $x
\in G_\mu$. Fix a strictly increasing sequence of nonnegative integers
$(b_n)$ such that $b_0=0$,
\begin{gather}
\lim_{n \to \infty} \frac{b_{n}}{b_{n+1}} = 0,\text{ and}\label{quotient}\\
\lim_{n \to \infty} \frac{\card{\{ k \in [b_n, b_{n+1})  \colon
x_k = \gamma\}}}{b_{n+1} - b_n} = \mu([\gamma]). \label{frequency}
\end{gather}
Fix $\beta\in\N^\N$.
Let $n\in\N$ and let $I_n$ be the set of positions in $[b_{2n-1},b_{2n})$ where $\gamma$ appears in $x$, that is,
\[
I_{n}=\{k\in\N:b_{2n-1}\le k <b_{2n}\text{ and }x_k=\gamma\}.
\]
Let $q_n=|I_n|$. Write $I_n=\{i_1,\ldots,i_{q_n}\}$ where $b_{2n-1}\le i_1<i_2<\ldots<i_{q_n}<b_{2n}$. Let $p_n=q_n-\lfloor q_n/\beta(n)\rfloor+1$ and $J_n=\{i_{p_n},i_{p_n+1},\ldots,i_{q_n}\}$. Note that $q_n/\beta(n)\le |J_n|=\lceil q(n)/\beta(n)\rceil\le q_n/\beta(n)+1$.
Define $\pi \colon \N^\N \to X$ by $\pi(\beta)=y$ where
\[
y_k = \begin{cases}
0, & \text{ if } k\in \bigcup J_n, \text{ and}\\
%b_{2n} - \frac{b_{2n} - b_{2n-1}}{f(n)} \leq k < b_{2n}\text{ for some  }n\ge 1,\\%x(k)=\beta,\\
x_k, & \text{otherwise}. %b_n \leq k < b_{n-1} - \frac{b_{n+1} - b_n}{f(n)} \\
\end{cases}
\]
Note that we have defined $y$ so that it agrees with $x$ except on the positions in the set
\[
\bigcup_{n\in\N} J_n\subseteq\bigcup_{n\in\N}[b_{2n-1},b_{2n}).
\]
In particular, for each $n\ge 0$ we have
\begin{equation}\label{equality}
  x_{[b_{2n},b_{2n+1})}=y_{[b_{2n},b_{2n+1})}.
\end{equation}
Note also that for each $n\in\N$ to get $y_{[0,b_{2n})}$ we modify $x_{[0,b_{2n})}$ along at most
\begin{equation}\label{J-set-bound}
b_{2n-1}+\left(\frac{b_{2n} - b_{2n-1}}{\beta(n)}\right)%=b_{2n-1}+\left(b_{2n} - \frac{b_{2n} - b_{2n-1}}{f(n)\right})
\end{equation}
positions.
We have $y \in X$ for every $\beta\in\N^\N$ since $y=\pi(\beta)$ is
obtained from $x\in X$ by setting $x_k$ to $0$ for $k\in\bigcup J_n$
and $0$ is the safe symbol for $X$.  The map $\pi$ is continuous since
for each $n\in\N$ it is easy to see that $y_{[0,b_{2n})}$ depends only
  on $x$ and $\beta(1),\ldots,\beta(n)$.
%Let $\mathcal{C}_3 = \{ f : \N^\N : \liminf_{n \to \infty} f(n) = \infty \}$. This set is $\Pi^0_3$-complete.

If $\beta \in \mathcal{C}_3$ then $\lim_{n \to \infty} \beta(n) =
\infty$  so the set $\bigcup J_n$ is easily seen to have upper asymptotic density
zero, that is $\dbar(\bigcup J_n)=0$ (use \eqref{quotient} and the bound given by
\eqref{J-set-bound}).
Then we have
\[
\dbar(x,y)=\dbar\left(\{j\in\omega: x_j\neq y_j\}\right)=\dbar(\bigcup J_n)=0.
\]
Using Lemma \ref{lem:dbar}\ref{dbar-a} and the fact that $x\in G_\mu$ we see that $y=\pi(\beta)$ is generic for $\mu$. Hence
$\mathcal{C}_3\subseteq \pi^{-1}(G_\mu)$.

If $\beta \notin
\mathcal{C}_3$ then for some strictly increasing sequence of integers
$(n_k)$ and some $K$ for each $k\in\N$ we have $\beta(n_k) = K <
\infty$. This implies that along the sequence $(2n_k)$ the frequency of the symbol
$\gamma$ in $y_{[b_{2n_k-1}, b_{2n_k})}$ is at most $\mu([\gamma])
  \pr{1 - 1/K} + \varepsilon$ where $\varepsilon$ can be made
  arbitrarily small by choosing $k$ large enough. Thus
\[
\liminf_{k \to \infty} \frac{\card{\{ 0\le s < b_{2n_k} \colon y_s =
  \gamma\}}}{2n_k} < \mu([\gamma]),
\]
while using \eqref{quotient}, \eqref{frequency} and \eqref{equality}
we get
\[
\lim_{k \to \infty} \frac{\card{\{ 0\le s < b_{2k+1} \colon y_s =
  \gamma\}}}{2n_k+1} = \mu([\gamma]).
\]
This implies that if $\beta\notin\mathcal{C}_3$, then $y$ is an
irregular point, $y\in I(X)$. Thus $\pi^{-1}(X\setminus Q(X))=\pi^{-1}(I(X))\supseteq
\N^\N\setminus\mathcal{C}_3$. We conclude
$\pi^{-1}(G_\mu)=\pi^{-1}(Q(X))=\mathcal{C}_3$, and $\pi^{-1}(I(X))=\N^\N\setminus\mathcal{C}_3$. The map $\pi$ is therefore a reduction map proving
that $B$ is $\bP^0_3$-hard and so $G_\mu$ and $Q(X)$ are $\bp^0_3$-complete and $I(X)$ is
$\bs^0_3$-complete.

Now suppose $\mu = \delta_\mathbf{0}$. Let $\nu$ be any ergodic measure on $X$ different from $\mu$ and let $x \in G_\nu$.
Let $\gamma\neq 0$ be any nonzero symbol such that $\nu([\gamma])>0$.
Let $b_n$ be an increasing sequence defined as before with $\mu$ replaced by $\nu$ in \eqref{frequency}.
Then repeat the definition of auxiliary sets $I_n$ and $J_n$ as above,
and define the reduction map $\pi \colon \N^\N \to X$ by $y=\pi(\beta)$ where
\[
y_k = \begin{cases}
x_k, & \text{ if } k\in \bigcup J_n, \text{ and}\\
%b_{2n} - \frac{b_{2n} - b_{2n-1}}{f(n)} \leq k < b_{2n}\text{ for some  }n\ge 1,\\%x(k)=\beta,\\
0, & \text{otherwise}. %b_n \leq k < b_{n-1} - \frac{b_{n+1} - b_n}{f(n)} \\
\end{cases}
\]
Reasoning as above we see that $\pi$ is continuous, maps $\mathcal{C}_3$ into $G_\mu\subseteq Q(X)$, and $\N^\N\setminus \mathcal{C}_3$ into $I(X)=X\setminus Q(X)\subseteq X\setminus G_\mu$. This concludes the proof.
\end{proof}

\section{Examples and applications} \label{sec:app}

We present here some rather straightforward but noteworthy consequences of Theorem~\ref{thm:fmt}.
Recall that Ki and Linton \cite{KiLinton} showed that in the classical case of $\base$-ary
expansions the set of normal numbers is $\bP^0_3$-complete. We consider
several classes of generalized expansions for which our theorem provides a similar result.

Consider first the case of generalized GLS expansions (a generalization of ``generalized L\"{u}roth
Series''). These include (generalized) L\"{u}roth series expansions, which in turn include
$\base$-ary expansions, as well as expansions generated by the tent map.
Note that for these applications we can also use Theorem \ref{thm:her} in place of Theorem~\ref{thm:fmt}.
\subsection{Some generalities} Let $\cI=\{I_n=[\ell_n,r_n)\subseteq [0,1]:n\in\cD\}$ be a family of pairwise disjoint intervals indexed by an at most countable set $\cD\subseteq \omega$. We call $\cD$ the \emph{set of digits} of $\cI$. We assume that $\cI$ is a partition of $[0,1]$ modulo sets of zero Lebesgue measure, that is, we assume $\sum_{n\in\cD} (r_n-\ell_n)=1$. We also set $I_\infty=[0,1]\setminus\bigcup_{n\in\cD}I_n$. Note that $1\in I_\infty$ and $I_\infty$ may be uncountable. We also define the \emph{address map} $A_\cI\colon [0,1]\to\mathcal{D}\cup\{\infty\}$ associated with $\cI$ by $A_\mathcal{I}(x)=k$ if and only if $x\in I_k$, where $k\in\mathcal{D}\cup\{\infty\}$. Given \emph{any} (not necessarily continuous) map $T\colon [0,1]\to [0,1]$ such that $T|_{\interior I_n}$ is continuous and strictly monotone for each $n\in\cD$, we define the \emph{itinerary} $\iota(x)$ of $x\in [0,1]$ with respect to $T$ and $\cI$ by $\iota(x)=a_1a_2\ldots\in (\cD\cup\{\infty\})^\N$, where $a_n=A_{\cI}(T^{n-1}(x))$ for $n\ge 1$. Note that $T$ must be Borel measurable.
We say that a Borel probability measure $\mu$ on $[0,1]$ is \emph{$T$-invariant} if $\mu(B)=\mu(T^{-1}(B))$ for every Borel set $B\subseteq[0,1]$. A sequence $(x_n)_{n\ge 0}\subseteq [0,1]$ is \emph{uniformly distributed with respect to $\mu$} if
\[
\lim_{N\to\infty}\frac{1}{N}\left|\{0\le n<N: x_n\in I\}\right|=\mu(I)
\]
for every interval $I\subseteq [0,1]$ with $\mu(\partial I)=0$.
We say that a point $x\in [0,1]$ \emph{generates} $\mu$ if the sequence $(T^n(x))_{n\ge0}$ is uniformly distributed with respect to $\mu$.
\subsection{Generalized GLS expansions} For more details we refer the reader to the book \cite{ITN}.
Let $\cI=\{[\ell_n,r_n):n\in\mathcal{D}\}$ be a family of intervals as above and fix a function $\epsilon\colon \mathcal{D} \to \{0,1\}$.
A \emph{generalized GLS expansion} of $x\in [0,1]$ determined by $(\cI,\epsilon)$ is an element $a_1a_2\ldots$ of $\cD^\N$ such that
\begin{equation}\label{eqn:GLS}
x:=\frac{h(a_1)+\epsilon(a_1)}{s(a_1)}+\sum_{n=2}^\infty (-1)^{\epsilon(a_1)+\ldots+\epsilon(a_{n-1})}\frac{h(a_n)+\epsilon(a_n)}{s(a_1)s(a_2)\ldots s(a_n)},
\end{equation}
where $s(n)=1/(r_n-\ell_n)$ and $h(n)=\ell_n/(r_n-\ell_n)$ for $n\in\mathcal{D}$. Note that for each sequence $a_1a_2\ldots\in\cD^\N$ the formula \eqref{eqn:GLS} defines a point $x\in [0,1]$. We write $\psi_\Ieps$ for the resulting map from $\cD^\N$ into $[0,1]$. Note that $\psi_\Ieps$ is continuous, but not necessarily onto.
Consider the map $T_\Ieps$ such that $T_\Ieps(x)=0$ for each $x\in I_\infty$ and on each interval
$I_n$ we have that $T_\Ieps|_{I_n}$ is a linear function with positive slope from $I_n$ onto $[0,1)$ if $\epsilon(n)=0$, and if $\epsilon(n)=1$ we use the linear map from $I_n$ onto $(0,1]$ with negative slope. This defines a map $T_\Ieps\colon [0,1]\to [0,1]$. % {\color{blue} It is easy to see that $I_\infty$ is an $F_\sigma$ set. (?)} We agree that $\epsilon(0)=\epsilon(\infty)=0$. We also define the address map $A_\mathcal{I}\colon [0,1]\to\mathcal{D}\cup\{\infty\}$ by $A_\mathcal{I}(x)=k$ if and only if $x\in I_k$, where $k\in\mathcal{D}\cup\{\infty\}$.
%We define an \emph{$(\mathcal{I},\epsilon)$-Generalised L\"uroth Series (GLS) expansion} of $x\in [0,1]$ to be a sequence $(a_k)_{k\ge 1}$ in $\mathcal{D}\cup\{\infty\}$ given by
%\begin{equation}\label{eqn:GLS_formula}
%a_k=A_\mathcal{I}(T_{\mathcal{I},\epsilon}^{k-1}(x)).
%\end{equation}
Let $I_\infty^*$ be the set of all points $x\in [0,1]$ such that the $T_\Ieps$-orbit of $x$ visits $I_\infty$ at some iterate, that is $T_\Ieps^n(x)\in I_\infty$ for some $n\ge 0$.
The itinerary map $\iota_\Ieps$ determines an $(\Ieps)$-GLS expansion for each $x\in [0,1]\setminus I_\infty^*$. The resulting sequences are called \emph{proper} $(\Ieps)$-GLS expansions and are dense in $\cD^\N$.
%and note that it is not continuous not onto.
%We say that $(\mathcal{I},\epsilon)$-GLS expansion of $x$ is {\em finite}, if its $T_{\mathcal{I},\epsilon}$-orbit visits $I_\infty\cup\{0\}$, that is, if $T_{\mathcal{I},\epsilon}^{k}(x)\in \{0\}\cup I_\infty$ for some $k\ge 0$.
%Since $T_{\mathcal{I},\epsilon}(0)\in \{0,1\}$, $T_{\mathcal{I},\epsilon}(1)=0$, and $T_{\mathcal{I},\epsilon}(I_\infty)\subesteq\{0\}$  we have that every point with finite $(\mathcal{I},\epsilon)$-GLS expansion eventually follows the orbit of $0$, and $0$ in turn is a fixed point or periodic with period two for $T_{\mathcal{I},\epsilon}$.

For each $x$ in the set
\[
\Omega_\Ieps=\bigcap_{k=0}^\infty\bigcup_{n\in\cD}T_\Ieps^{-k}(I_n\cap(0,1))=[0,1]\setminus\bigcup_{k\ge 0}T_\Ieps^{-k}(\{0\})
\]
the itinerary $\iota_\Ieps$ is continuous and gives us the unique $(\Ieps)$-GLS expansion of $x$.
Note that $T_\Ieps^{-1}(\{0\})=[0,1]\setminus\bigcup_{n\in\cD}\interior I_n$ is a closed nowhere dense set, hence $\Omega_\Ieps$ is a dense $G_\delta$ set. Furthermore, the function
$\iota_\Ieps$  is a homeomorphism of $\Omega_\Ieps$ onto the set  $\iota_\Ieps(\Omega_\Ieps)$ with the inverse given by $\psi_\Ieps$ restricted to $\iota_\Ieps(\Omega_\Ieps)$. We also have $\psi_\Ieps|_{\iota_\Ieps(\Omega_\Ieps)}\circ \sigma=T_\Ieps\circ\psi_\Ieps|_{\iota_\Ieps(\Omega_\Ieps)}$.
The \emph{fundamental interval} $\Delta(i_1,\ldots,i_k)$, where $i_1,\ldots,i_k\in\cD\cup\{\infty\}$ is the set
\[
\{x\in [0,1]:\iota_\Ieps(x)\in [i_1\ldots i_k]\subseteq (\cD\cup\{\infty\})^\N\}.
\]
Fix $i_1,\ldots,i_k\in\cD$. Take %\cup\{\infty\}
$x\in \Delta(i_1,\ldots,i_k)$.
Writing $p_k/q_k$ for the partial sum of the $(\Ieps)$-GLS expansion for $x$ given by \eqref{eqn:GLS}
(summing up to $n=k$) and setting $\epsilon_j=\epsilon(A_\cI(T_{\Ieps}^{j-1}(x)))$ for $j=1,\ldots,k$ we see that
\[
x=\frac{p_k}{q_k}+(-1)^{\epsilon_1+\ldots+\epsilon_{k}}\frac{T_{\Ieps}^k(x)}{s(i_1)\cdot\ldots\cdot s(i_k)}.
\]
Since $T_{\Ieps}^k(x)$ takes any value in $[0,1)$ if $\epsilon_k=0$ and in $(0,1]$ if $\epsilon_k=1$ we have
\[
\Delta(i_1,\ldots,i_k)=\begin{cases}
                         [d_k,d_k+t_k), & \mbox{if } \epsilon_k=0, \text{ and}\\
                         [d_k-t_k,d_k), & \mbox{otherwise},
                       \end{cases}\]
%\quad\text{where }r_k=p_k/q_k,\text{ and }t_k= 1/(s(i_1)\cdot\ldots\cdot s(i_k)).
where $d_k=p_k/q_k$, and $t_k= 1/(s(i_1)\cdot\ldots\cdot s(i_k))$.
\begin{thm} \label{thm:app_gls} Let $T_{\Ieps}$ be the generalized GLS expansion map associated with the pair $(\Ieps)$.
If $\mu$ is a $T_{\Ieps}$-invariant Borel probability measure with $\mu(\{0\})=0$, then the set of $x\in [0,1]$ which generate  $\mu$ is $\bP^0_3$-complete. Furthermore, the set of irregular points for $T_{\mathcal{I},\epsilon}$ is $\bS^0_3$-complete.
\end{thm}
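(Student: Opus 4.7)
The idea is to transport the problem to the symbolic setting via the itinerary map and to apply Theorem~\ref{thm:fmt}. Take $X=\cD^\N$, the full shift on the at-most-countable alphabet $\cD$; it trivially has the right feeble specification property (put $\mathcal{G}=\lang(X)$ and $N(\varepsilon)\equiv 0$) and, assuming $|\cD|\ge 2$, admits at least two shift-invariant Borel probability measures. Since $\mu(\{0\})=0$ and $\mu$ is $T_\Ieps$-invariant, induction gives $\mu(T_\Ieps^{-k}(\{0\}))=0$ for every $k\ge 0$, so $\mu(\Omega_\Ieps)=1$. On $\Omega_\Ieps$ the itinerary map $\iota_\Ieps$ is a homeomorphism conjugating $T_\Ieps$ with $\sigma$, hence the pushforward $\tilde\mu:=(\iota_\Ieps)_*\mu$ is a $\sigma$-invariant Borel probability measure on $X$ satisfying $\tilde\mu([w])=\mu(\Delta(w))$ for every $w\in\cD^{<\omega}$.

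Next I would establish the dictionary between the dynamical and the symbolic descriptions. Every fundamental interval $\Delta(w)$ has boundary inside $\bigcup_{k\ge 0}T_\Ieps^{-k}(\{0\})$, which is a $\mu$-null set, and the fundamental intervals generate the Borel $\sigma$-algebra of $[0,1]$. A Portmanteau-type argument then yields, for every $x\in\Omega_\Ieps$, that $x$ generates $\mu$ if and only if $\iota_\Ieps(x)\in G_{\tilde\mu}$, and that $x$ is $T_\Ieps$-irregular if and only if $\iota_\Ieps(x)\in I(X)$. The residual set $[0,1]\setminus\Omega_\Ieps$ is countable, hence negligible for Borel-complexity classifications.

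Applying Theorem~\ref{thm:fmt} to $(X,\tilde\mu)$ gives that $G_{\tilde\mu}$ is $\bP^0_3$-complete and $I(X)$ is $\bS^0_3$-complete in $X$. Upper bounds in $[0,1]$ come for free: the set of $\mu$-generators is a countable intersection, over $w\in\cD^{<\omega}$, of the $\bP^0_3$ sets $\{x\in[0,1]:\lim_N N^{-1}|\{n<N:T_\Ieps^n(x)\in\Delta(w)\}|=\tilde\mu([w])\}$, and the irregular set is analogously $\bS^0_3$. For the lower bounds I would compose the Wadge reduction $\pi\colon\N^\N\to X$ from the proof of Theorem~\ref{thm:fmt} with the continuous map $\psi_\Ieps\colon X\to[0,1]$, so that the resulting continuous map $\Phi:=\psi_\Ieps\circ\pi$ reduces $\mathcal{C}_3$ to the set of $\mu$-generators in $[0,1]$ and $\N^\N\setminus\mathcal{C}_3$ to the $T_\Ieps$-irregular set.

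The hard part is verifying this reduction pointwise. One must confirm that $\psi_\Ieps(\pi(\alpha))\in\Omega_\Ieps$ (or at least that its $T_\Ieps$-orbit visits each $\Delta(w)$ with the same asymptotic frequency as $w$ appears in $\pi(\alpha)$). I would handle this by selecting the reference generic points $x,z$ entering the construction of $\pi$ from the full-$\tilde\mu$- and full-$\tilde\nu$-measure set $\iota_\Ieps(\Omega_\Ieps)$, and then using Lemma~\ref{lem:dbar} to certify that the specification-based concatenation, together with the Hamming-negligible perturbations $|s_j|/|v_j|\to 0$ built into the proof of Theorem~\ref{thm:fmt}, transfers the symbolic empirical statistics of $\pi(\alpha)$ across $\psi_\Ieps$ into the $T_\Ieps$-empirical statistics of $\psi_\Ieps(\pi(\alpha))$.
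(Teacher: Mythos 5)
Your overall strategy---push the problem to the full shift $\cD^\N$ via the itinerary map, invoke Theorem~\ref{thm:fmt} there, and transfer back---is the same as the paper's, and your upper-bound and measure-transfer steps are fine. The gap is in the lower bound, precisely at the point you yourself flag as ``the hard part'', and your proposed fix does not close it. For $\Phi=\psi_\Ieps\circ\pi$ to be a Wadge reduction you need that for \emph{every} $\alpha\in\N^\N$ the $T_\Ieps$-orbit of $\psi_\Ieps(\pi(\alpha))$ has the same empirical statistics as the $\sigma$-orbit of $\pi(\alpha)$. But the semiconjugacy $\psi_\Ieps\circ\sigma=T_\Ieps\circ\psi_\Ieps$ is only valid on $\iota_\Ieps(\Omega_\Ieps)$: if $\pi(\alpha)$ is an improper expansion (for instance if some $\sigma^n(\pi(\alpha))$ is sent by $\psi_\Ieps$ to a right endpoint $r_k$ or into $I_\infty$), the $T_\Ieps$-orbit derails and from that moment on has nothing to do with the tail of $\pi(\alpha)$. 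Choosing the reference generic points $x,z$ inside $\iota_\Ieps(\Omega_\Ieps)$ does not help, because that set is not closed under the concatenation-and-perturbation operations used to build $\pi(\alpha)$, and Lemma~\ref{lem:dbar} controls Hamming-small changes of the symbolic sequence, not a failure of the conjugacy downstairs. A second, smaller error: $[0,1]\setminus\Omega_\Ieps$ need not be countable (it contains $I_\infty$, which the paper explicitly allows to be uncountable), so it is not negligible by cardinality; what saves the day is that no point of it can generate $\mu$, since its orbit eventually reaches $0$ and $\mu(\{0\})=0$.

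The paper avoids your difficulty by running the reduction in the opposite direction: it takes the continuous map $\psi_\Ieps\colon\cD^\N\to[0,1]$ itself as the Wadge reduction, shows that the preimage of the set of $\mu$-generators is exactly $G_\nu\cap\iota_\Ieps(\Omega_\Ieps)$ with $\nu=\iota_\Ieps^*(\mu)$, and observes that this set is still $\bP^0_3$-hard because it differs from the $\bP^0_3$-complete set $G_\nu$ (supplied by Theorem~\ref{thm:fmt}) only by a subset of the at most countable set of improper expansions, and removing a countable (hence $\bs^0_2$) set from a $\bP^0_3$-hard set preserves hardness. This only requires identifying $\psi_\Ieps^{-1}$ of the target set, never controlling where a constructed sequence lands, so the problematic points are absorbed into a countable exceptional set instead of having to be avoided. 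If you insist on the composition $\psi_\Ieps\circ\pi$, you would have to reopen the construction of $\pi$ inside the proof of Theorem~\ref{thm:fmt} and guarantee $\pi(\alpha)\in\iota_\Ieps(\Omega_\Ieps)$ for all $\alpha$, which is exactly the extra work the paper's formulation is designed to bypass. The same remarks apply verbatim to the irregular set.
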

\begin{proof}First note that $\mathcal{D}^\N$ satisfies the assumptions of Theorem \ref{thm:fmt}.
%and for each such measure $\nu$ we have that
%$\mu=\psi_{\mathcal{I},\epsilon}^*(\nu)$ is a $T_{\mathcal{I},\epsilon}$-invariant Borel probability measure on $[0,1]$. This is because we have
%$\psi_{\mathcal{I},\epsilon}\circ \sigma= T_{\mathcal{I},\epsilon}\circ \psi_{\mathcal{I},\epsilon}$.
Let $\mu$ be a $T_{\Ieps}$-invariant Borel probability measure on $[0,1]$ such that $\mu(\{0\})=0$.
%, that is, let $\mu$ be such that
%$\mu(T_{\mathcal{I},\epsilon}^{-1}(B)=\mu(B)$ for every Borel set $B\subesteq[0,1]$
%and if $A$ is a Borel set in $[0,1]$ such that $T_{\mathcal{I},\epsilon}^{-1}(A)=A$, then $\mu(A)\in\{0,1\}$
%(note that $T_{\mathcal{I},\epsilon}$ is not continuous, but it is a Borel measurable map). Assume that $\mu$ is not the Dirac mass measure at $0$. Let $\nu=\iota_{\mathcal{I},\epsilon}^*(\mu)$ is an ergodic shift-invariant measure concentrated on $\iota_{\mathcal{I},\epsilon}(\Omega_{\mathcal{I},\epsilon})\subesteq\mathcal{D}^\N$, because
%$\iota_{\mathcal{I},\epsilon}$ is continuous on $\Omega_{\mathcal{I},\epsilon}$ and
It follows that $\mu(\Omega_{\Ieps})=1$. Furthermore, no point in $[0,1]\setminus\Omega_\Ieps$
can generate $\mu$, as all these points are eventually mapped to $0$ by $T_\Ieps$. Then we can define
$\nu=\iota_{\Ieps}^*(\mu)$ and $\nu$ is a shift-invariant measure concentrated on $\iota_{\Ieps}(\Omega_\Ieps)\subseteq\cD^\N$.
%because
%$\iota_{\mathcal{I},\epsilon}$ is continuous on $\Omega_{\mathcal{I},\epsilon}$ and
Since $\nu$ is shift-invariant and its support is contained in $\mathcal{D}^\N$, which has the specification property (c.f.\ \cite{GK,Sigmund}), the set of $\nu$-generic points $G_\nu$ is nonempty and uncountable. On the other hand $\mathcal{D}^\N\setminus \iota_\Ieps(\Omega_\Ieps)$ is at most countable, so $G_\nu\cap\iota_\Ieps(\Omega_\Ieps)\neq\emptyset$. For each $z\in G_\nu\cap\iota_\Ieps(\Omega_\Ieps)$ we have that the $\sigma$-orbit of $z$ visits a cylinder $[a_1\ldots a_k]$ with limiting frequency $\nu([a_1\ldots a_k])$ for every $a_1,\ldots, a_k\in\cD$. Since $z\in \iota_\Ieps(\Omega_\Ieps)$ and
$\psi_{\Ieps}\circ \sigma= T_{\Ieps}\circ \psi_{\Ieps}$ on that set, we have that
$\sigma^n(z)\in [a_1\ldots a_k]$ if and only if
$T_\Ieps^n(\psi_\Ieps(z))\in \Delta(a_1,\ldots,a_k)$.
%=\psi_\Ieps([a_1\ldots a_k]\cap \Omega_{\Ieps})$.
It follows that $\psi_\Ieps(z)$ visits $\Delta(a_1,\ldots,a_k)$ with frequency $\nu([a_1\ldots a_k])=\iota_{\Ieps}^*(\mu)([a_1\ldots a_k])=
\mu(\iota_{\Ieps}^{-1}([a_1\ldots a_k])=\mu(\Delta(a_1,\ldots,a_k))$.
Furthermore, the boundary points of every basic interval $\Delta(a_1,\ldots,a_k)$ are eventually mapped to $0$, therefore $\mu(\partial\Delta(a_1,\ldots,a_k))=0$. Note also that, for each interval $J$ in $[0,1]$ and $\delta>0$ we can find a countable family $\mathcal{J}$ of disjoint basic intervals contained in $J$ such that $\mu(J\setminus\bigcup\mathcal{J})<\delta$. It follows that $\psi_\Ieps(z)$ generates $\mu$.

Using that $\psi_\Ieps$ is continuous on $\cD^\N$ and that $\psi_\Ieps(z)$ generates $\mu$ if and only if $z\in G_\nu\cap\iota_\Ieps(\Omega_\Ieps)$ we see that to finish the proof we need to show that $G_\nu\cap\iota_\Ieps(\Omega_\Ieps)$ is $\bP^0_3$-complete. But this is obvious since $G_\nu$ is $\bP^0_3$-complete by Theorem \ref{thm:fmt} and $G_\nu\setminus\iota_\Ieps(\Omega_\Ieps)$ is contained in the set of improper expansions, so it is  at most countable.

Now consider any point $x$ which is irregular for $T_\Ieps$, equivalently, with irregular $(\Ieps)$-GLS expansion. Clearly, $x\in\Omega_\Ieps$, hence  the visits of the $T_\Ieps$-orbit of $x$ to some basic interval $\Delta(a_1,\ldots,a_k)$, where $a_1,\ldots,a_k\in\cD$, doesn't have a limiting frequency.
It follows that $z=\iota_\Ieps(x)\in I(\cD^\N)$.
By Theorem~\ref{thm:fmt}, the irregular set $I(\cD^\N)$ of the full shift on $\cD^\N$  is $\bS^0_3$-complete.
Therefore the set of all $x$ irregular for $T_\Ieps$ equals $\psi_\Ieps(I(\cD^\N)\cap\iota_\Ieps(\Omega_\Ieps))$. Reasoning as above we see that the latter must be a $\bS^0_3$-complete set, which ends the proof.
\end{proof}
The Lebesgue measure $\lambda$
on $[0,1]$ is easily seen to be an invariant ergodic measure for $T_\Ieps$ (see \cite[Chapter 3]{ITN}). A real
$x \in [0,1]$ is normal for the $(\Ieps)$-GLS expansion if $x$ generates $\lambda$.

\begin{cor}\label{cor:app_gls}
For any $(\Ieps)$-GLS expansion, the set of numbers which are normal for this expansion is $\bP^0_3$-complete.
\end{cor}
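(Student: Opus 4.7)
The plan is to simply invoke Theorem~\ref{thm:app_gls} with $\mu$ taken to be the Lebesgue measure $\lambda$ on $[0,1]$. To do this, I need to verify the hypotheses of that theorem for $\lambda$ and recall the definition of normality for an $(\mathcal{I},\epsilon)$-GLS expansion.

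First, the Lebesgue measure $\lambda$ is a $T_{\mathcal{I},\epsilon}$-invariant Borel probability measure on $[0,1]$; this is a standard fact for generalized GLS transformations, referenced in the excerpt (see \cite[Chapter 3]{ITN}). Next, $\lambda(\{0\}) = 0$ trivially, so the hypothesis $\mu(\{0\}) = 0$ of Theorem~\ref{thm:app_gls} is satisfied. By definition, stated immediately before the corollary, a real number $x \in [0,1]$ is normal for the $(\mathcal{I},\epsilon)$-GLS expansion precisely when $x$ generates $\lambda$ under $T_{\mathcal{I},\epsilon}$, i.e., when the sequence $(T_{\mathcal{I},\epsilon}^n(x))_{n \geq 0}$ is uniformly distributed with respect to $\lambda$.

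Therefore the set of normal numbers for the $(\mathcal{I},\epsilon)$-GLS expansion coincides with the set of $x \in [0,1]$ generating $\lambda$, and Theorem~\ref{thm:app_gls} immediately yields that this set is $\boldsymbol{\Pi}^0_3$-complete. No additional argument is needed — there is no obstacle to speak of here, since all the work has already been done inside Theorem~\ref{thm:app_gls} (namely, transporting the $\boldsymbol{\Pi}^0_3$-completeness of $G_\nu \cap \iota_{\mathcal{I},\epsilon}(\Omega_{\mathcal{I},\epsilon})$ in the symbolic model back to $[0,1]$ via the address map and its inverse $\psi_{\mathcal{I},\epsilon}$).
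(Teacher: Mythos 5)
Your proposal is correct and matches the paper's own (implicit) argument exactly: the corollary is obtained by applying Theorem~\ref{thm:app_gls} to the Lebesgue measure $\lambda$, which is $T_\Ieps$-invariant with $\lambda(\{0\})=0$, and noting that normality for the $(\Ieps)$-GLS expansion is by definition the same as generating $\lambda$. Nothing further is needed.
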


The generalized GLS expansions of Corollary~\ref{cor:app_gls} include several types of expansions
as we record in the following corollary.

\begin{cor}
Each of the following sets is a $\bP^0_3$-complete subset of $[0,1)$: numbers normal for the L\"{u}roth
series expansions (see \cite{ITN}), normal for $Q_\infty$ expansions (see \cite{Qinfinity}),
$\alpha$-L\"{u}roth expansions (see \cite{ITN}), and numbers normal for
$\base$-ary expansions.
\end{cor}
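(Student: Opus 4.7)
The plan is to observe that each of the four expansion systems listed is a special case of a generalized GLS expansion, at which point Corollary~\ref{cor:app_gls} finishes the job. So the real content of the argument is to exhibit, for each system, an explicit pair $(\cI,\epsilon)$ such that the map $T_\Ieps$ of \S\ref{sec:app} coincides (up to a null set) with the Gauss-like map underlying that expansion and such that the notion of normality is the same as generating the Lebesgue measure under $T_\Ieps$.

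First I would dispose of the classical $\base$-ary expansions: take $\cD=\{0,1,\dots,\base-1\}$, $I_n=[n/\base,(n+1)/\base)$, and $\epsilon\equiv 0$. Then $T_\Ieps(x)=\base x\bmod 1$, the itinerary recovers the usual base-$\base$ expansion, and normality coincides with the standard Borel definition. For the L\"uroth series I would take $\cD=\N$ and $I_n=(1/(n+1),1/n]$ with $\epsilon\equiv 0$; then $T_\Ieps(x)=n(n+1)x-n$ on $I_n$, which is the classical L\"uroth map, and the formula~\eqref{eqn:GLS} reduces to the standard L\"uroth expansion. For $\alpha$-L\"uroth (in the sense of \cite{ITN}) one starts with an arbitrary countable partition $\alpha=\{A_n\}_{n\in\cD}$ of $[0,1)$ by half-open intervals of lengths summing to $1$, takes $\cI=\alpha$, and chooses $\epsilon$ to be either identically $0$ (the ``alternating'' variant is obtained by an appropriate choice of $\epsilon$); the induced $T_\Ieps$ is exactly the $\alpha$-L\"uroth map. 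The $Q_\infty$-expansions (see \cite{Qinfinity}) are, by construction, generated by the address map of a countable partition of $[0,1)$ together with a piecewise linear onto map, and thus also fit the framework of \S\ref{sec:app} for a suitable $(\Ieps)$.

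Once these identifications are in place, I would check in each case that the Lebesgue measure $\lambda$ on $[0,1)$ is $T_\Ieps$-invariant (this is standard and recorded in~\cite{ITN} for L\"uroth and $\alpha$-L\"uroth expansions, is trivial for base $\base$, and is part of the construction for $Q_\infty$). Moreover $\lambda(\{0\})=0$, so the hypothesis of Theorem~\ref{thm:app_gls} holds. By the definition given just before Corollary~\ref{cor:app_gls}, normality for the expansion means that $x$ generates $\lambda$ under $T_\Ieps$, hence Corollary~\ref{cor:app_gls} yields that each of the four sets of normal numbers is $\bP^0_3$-complete.

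The only genuinely delicate point is checking that the ``normality'' of each of these classical definitions really does match ``$x$ generates $\lambda$ under $T_\Ieps$''; this amounts to verifying that fundamental intervals $\Delta(a_1,\dots,a_k)$ have Lebesgue measure equal to the product of the relevant digit probabilities and that the asymptotic digit-block frequencies defining normality translate into visit frequencies to these fundamental intervals. This is routine for each of the four systems and is already implicit in the literature cited above, so beyond this verification no further work is required.
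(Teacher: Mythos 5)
Your proposal is correct and follows exactly the route the paper intends: the paper gives no separate proof of this corollary, simply noting that each listed expansion is a special case of a generalized GLS expansion and invoking Corollary~\ref{cor:app_gls}, which is precisely your argument (with the explicit choices of $(\cI,\epsilon)$ and the normality-matching check that the paper leaves implicit). No gaps.
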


%\begin{proof}
%This follows immediately from Theorem~\ref{thm:appgls} since the tent map
%is an instance of that considered in the generalized GLS expansions.
%\end{proof}

%\begin{rem}
%Theorem~\ref{thm:appgls} actually shows that for any $f$ (as in Corollary~\ref{cor:tm})
%invariant measure $\mu$
%on $[0,1]$ (not necessarily Lebesgue measure) we have that the corresponding
%set of generic points is $\bP^0_3$-complete.
%\end{rem}

\subsection{$\beta$-expansions}

Our next application concerns $\beta$-expansions. Fix a real number $\beta>1$. Set $
\cD_\beta=\{0,1,\ldots,\lfloor \beta \rfloor\}$. For $n\in\cD_\beta$ let
\[
I_n=\begin{cases}
      \left[{n}/{\beta},{(n+1)}/{\beta}\right), & \mbox{if } 0\le n< \lfloor \beta \rfloor, \text{ and}\\
      \left[{\lfloor \beta \rfloor}/{\beta},1\right), & \mbox{otherwise}.
    \end{cases}
\]
Define $\cI_\beta=\{I_n\colon n\in\cD_\beta \}$ and $T_\beta(x)=\beta x\bmod 1$ for $x\in[0,1]$.
A \emph{$\beta$-expansion} of $x \in [0,1]$ is a sequence $d_1d_2\ldots\in \cD_\beta^\N$ so that
$x= \sum_{i=1}^\infty \frac{d_{i}}{\beta^i}$. For each $x \in [0,1)$ the itinerary of $x$ with respect to $T_\beta$ and $\cI_\beta$, denoted by $\iota_\beta(x)=d_1d_2\ldots$ and given by the formula $d_i= \lfloor\beta T^{i-1}_\beta(x)\rfloor$ for $i\ge 1$, defines a sequence $\vec{d}=d_1d_2\ldots\in
\cD_\beta^\N$ which is a $\beta$-expansion of $x$. We use the same formula to define  the $\beta$-expansion of
$1$, denoted by $1_\beta$. We let $\vec{e}=e_1e_2\ldots=1_\beta$ if $1_\beta$ does not end in a tail of $0$'s,
and if $1_\beta=d_1\ldots d_k00\ldots$, where $d_k\neq 0$, then we let
$\vec{e}=e_1e_2\ldots$ be the periodic sequence $(d_1\dots d_{k-1} d_k-1)^\infty$.
%The digits of the $\beta$-expansion are elements of the set .
%Let $T_\beta\colon [0,1]\to [0,1]$ be the transformation
%given by $T_\beta(x)= \beta x \mod 1$. For $y\in [0,1)$ let $d(y)=
%\lfloor \beta y\rfloor$, so $0 \leq d(y)\le  \lfloor \beta \rfloor$.
 %which is defined exactly as in the case when $x \in [0,1)$.
%Note that $x\mapsto \vec{d}=d_1d_2\ldots\in D^\N$ where $d_i= d(T^{i-1}_\beta(x))$ for $i\ge 1$ defines a function $\iota_\beta\colon [0,1]\to D^\N$.
We say a sequence of digits $\vec{d}=d_1d_2\ldots\in \cD_\beta^\N$ is a {\em proper $\beta$-expansion} if there exists $x\in [0,1)$ such that $\vec{d}=\iota_\beta(x)$.
A point $x\in (0,1)$ has a unique $\beta$-expansion $\vec{d}$ given by $\vec{d}=\iota_\beta(x)$ if and only if $T_\beta^{i}(x)\neq 0$ for each $i\in\N$.
If $x\in (0,1)$ and $T_\beta^{i}(x)=0$ for some $i\in\N$, then $x$ has exactly two $\beta$-expansions: one proper, and the other we call \emph{improper}.
Clearly, the set of improper $\beta$-expansions is countable.
%$d_i= d(T^{i-1}_\beta(x))$ for $i\ge 1$.

A sequence $d_1d_2\ldots\in \cD_\beta^\N$ is \emph{admissible} if it is a $\beta$-expansion of some $x\in[0,1]$. We recall
the following well-known fact (\cite{Parry}): a sequence $d_1d_2\ldots\in \cD_\beta^\N$
is a proper $\beta$-expansion if and only if for all $i\in\N$ we have that $d_id_{i+1}\ldots <_\lex e_1e_2\dots$,
where $<_\lex$ denotes the strict lexicographic ordering on $\cD_\beta^\N$. We note that the sequence
$\vec{e}$ itself also has the property that for any shift $\sigma^k(\vec{e})=e_ke_{k+1}\ldots$ we have
$\sigma^k(\vec{e})\leq_\lex \vec{e}$. Observe that if $\vec{d}$ is an admissible sequence and $\vec{d}'$
is obtained from $\vec{d}$ by lowering certain digits, then $\vec{d}'$
is also admissible. The set of proper $\beta$-expansions $Y_\beta:=\iota_\beta([0,1))\subseteq \cD_\beta^\N$
is shift-invariant but not closed in $\cD_\beta^\N$. Let $X_\beta$ be the closure of $Y_\beta$ in $\cD_\beta^\N$,
so $X_\beta$ is a subshift of $\cD_\beta^\N$ known as a \emph{$\beta$-shift}. Every $\beta$-shift is hereditary.
We can characterise  $X_\beta$ as the set of admissible sequences, or equivalently, those sequences
$d_1d_2\ldots\in \cD_\beta^\N$ such that for all $i\in\N$ we have that
$d_id_{i+1}\ldots \leq_\lex \vec{e}$. From this it follows that the set of improper $\beta$-expansions $X_\beta\setminus Y_\beta$
is countable and $Y_\beta$ is a dense $G_\delta$ subset of $X_\beta$.
There is a continuous map $\psi_\beta\colon X_\beta\to [0,1]$ given by $\psi_\beta(d_1d_2\ldots)=\sum_{i=1}^\infty \frac{d_{i}}{\beta^i}$.
%The restriction of $\psi_\beta$ to the set of admissible sequence $Y$ is a homeomorphism on its image $\psi_\beta(Y)=\Omega_\beta$, which is a set of numbers $x\in[0,1]$ with unique $\beta$-expansion.
The restriction of the map $\psi_\beta$ to $Y_\beta$ is a bijection onto $[0,1]$, but its inverse $\iota_\beta=(\psi_\beta|_{Y_\beta})^{-1}$ is not continuous on $[0,1]$. But $\iota_\beta$ is continuous on a subset $\Omega_\beta$ of $[0,1]$ defined by
\[
\Omega_\beta=[0,1)\setminus%\left(
\bigcup_{k\ge 0}T_\beta^{-k}(\{0\}).%\cup \bigcup_{k\ge 0}T_\beta^{-k}(\{1\})\right).
\]
Note that every point in $\Omega_\beta$ has a unique $\beta$-expansion, $0\notin \Omega_\beta$, but $0$ has a unique $\beta$-expansion, and the only other point in $[0,1]$ which may have a unique $\beta$-expansion and stay outside of $\Omega_\beta$ is $1$. Let $Z_\beta$ be the set of unique $\beta$-expansions of points in $(0,1)$. We have $Z_\beta=\iota_\beta(\Omega_\beta)\subseteq Y_\beta$, more precisely
\[Z_\beta=
X_\beta\setminus\left(
\{d_1d_2\ldots\in\cD_\beta^\N:(\exists\,i\ge 1)\,d_id_{i+1}\ldots=0^\infty\text{ or }d_id_{i+1}\ldots=\vec{e}\}
\right).
\]
Thus $\iota_\beta$ restricted to $\Omega_\beta$ is the inverse of $\psi_\beta|_{Z_\beta}$.
The admissible sequences can also be described as follows. Let $G$ be the labelled directed graph
(with \emph{loops}, that is edges whose initial and terminal vertices are the same) on the vertex set $\omega$ %=\{ 0,1,2,\dots\}$
defined as follows. Each vertex $i\in \omega$ is the initial vertex of the
edge leading to the  vertex $i+1$, and labelled with $e_{i+1}$. If $e_{i+1}>0$,
then we add $e_i$ many edges from $i$ to $0$, and label them with $0,1,\dots,e_i-1$.
The elements of $X_\beta$ are obtained by taking an infinite path starting at $0$ and reading off the sequence of labels of
the edges used to construct the path. The proper $\beta$-expansions (elements of $Y_\beta$) are exactly the infinite sequences of labels of paths obtained by starting at
the vertex $0$ and  returning to $0$ infinitely many times. In particular, $\lang(X_\beta)$ corresponds to the labels of finite paths through $G$
starting at $0$.
Note that as $e_1=\lfloor\beta\rfloor$, there are $e_1>0$ edges from $0$ to $0$
(and these are the only loops in the graph $G$).

\begin{lem}\label{lem:rfs-for-beta}
For every $\beta>1$ the $\beta$-shift $X_\beta$ has the right feeble specification property.
\end{lem}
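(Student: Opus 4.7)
The plan is to let $\G$ consist of all $w\in\lang(X_\beta)$ whose unique path in the deterministic graph $G$ starting at vertex $0$ terminates again at $0$; equivalently, $w$ labels a loop at $0$ in $G$. Property~(\ref{cond:rfs_one}) of Definition~\ref{def:rfs} is then immediate: the concatenation of two loops at $0$ is another loop at $0$.

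For property~(\ref{cond:rfs_two}), fix $\eps>0$ and take $N(\eps)$ to be any integer strictly greater than $1/\eps$. Given $u\in\G$ and $v\in\lang(X_\beta)$ with $|v|\geq N(\eps)$, I plan to choose the glueing word $s$ to be empty and modify $v$ in \emph{at most one} coordinate to produce $v'$. Denote by $s_0=0,s_1,\ldots,s_{|v|}$ the successive vertices visited by the path labelled by $v$. Let
\[
Q=\{\,i\in\{1,\ldots,|v|\}:\text{vertex }s_{i-1}\text{ emits a back-edge to }0\,\},
\]
and set $q:=\max Q$. Since vertex $0$ emits $e_1=\lfloor\beta\rfloor\geq 1$ loops, we have $1\in Q$, so $q$ is well defined.

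The construction rests on two observations. First, for every $i>q$ the vertex $s_{i-1}$ has no back-edge, so its only outgoing edge is the forward edge, whose label must then be $0$ (in $G$, back-edges are present at vertex $j$ precisely when the forward label $e_{j+1}$ is positive). Hence the symbols $v_{q+1},\ldots,v_{|v|}$ are all $0$ and the states progress as $s_i=s_{i-1}+1$. Second, if $q<|v|$ then $v$ cannot actually traverse a back-edge at position $q$: otherwise $s_q=0$ and then $q+1$ would also lie in $Q$ (as vertex $0$ carries loops), contradicting maximality of $q$. So $v_q$ equals the positive forward label at $s_{q-1}$. Define $v'$ by setting $v'_q=0$ and $v'_i=v_i$ for $i\neq q$, with the sole exception $v'=v$ when $v$ already closes at $0$.

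Reading $v'$ from $0$: we follow $v$ up to state $s_{q-1}$; at position $q$ the label $0$ takes a back-edge to $0$ (available because $s_{q-1}$ emits back-edges, and the smallest back-edge label is $0$); the trailing zeros $v'_{q+1}=\cdots=v'_{|v|}=0$ then loop at $0$. Therefore $v'\in\G$, and the required bounds follow: $|v'|=|v|$, $|s|=0\leq\eps|v|$, and $d_H(v,v')\leq 1/|v|<\eps$. Finally $usv'=uv'\in\G$ by closure of $\G$ under concatenation. The only delicate step is the maximality argument above that pins down the behaviour of $v$ after position $q$ and rules out a branch at $q$ itself; this is what allows the modification to cost a single symbol, independently of the possibly intricate structure of the $\beta$-expansion of $1$.
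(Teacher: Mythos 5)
Your proof is correct and follows essentially the same route as the paper: you take $\G$ to be the labels of closed paths at the vertex $0$ of the graph $G$, use an empty glue word $s$, and show that changing a single symbol of $v$ to $0$ forces an early return to $0$ after which the (necessarily all-zero) tail loops there, giving $d_H(v,v')\le 1/|v|$. The only cosmetic difference is that you locate the modification at the last position admitting a back-edge, whereas the paper uses the last nonzero symbol; both choices make the same argument work.
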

\begin{proof}
Let $\G\subseteq \lang(X_\beta)$ be the set of $w \in \cD_\beta^{< \omega}$ corresponding to closed paths
in the graph $G$ which start and end at the vertex $0$. Clearly if $u \in \G$
and $v \in \lang(X_\beta)$ then $uv \in \lang(X_\beta)$ (since $v$ corresponds to a label of a path starting at $0$).
We claim that there is a single symbol in $v$ so that if we change it to $0$,
then for the resulting word $v'$ we have that $uv'\in \G$, so $uv'$ is a label of a closed path based at the vertex $0$.
To see this, let $v \in \lang(X_\beta)$ with $|v|=k$, and let $g_1,\dots g_{k}$
be edges of $G$ whose labels give $v$. As remarked above we may assume that $g_1$ starts at the vertex $0$.
If $v=0^k$ then there is nothing to prove: we follow the closed path defining $u$ and then use $k$ times the loop based at $0$.
Otherwise, let $1\le i \le k$ be largest index so that $g_i$ is an edge labelled by a nonzero symbol.
It follows that $v=v_1\ldots v_i0^{k-i}$, where $v_i \neq 0$. Let $j$ be the initial vertex of $g_i$. Since $v_i\neq 0$ there is an edge from $j$ to $0$ labelled with $0$. That is, reading off the $i$th symbol $v_i$
of $v$, we have the option of returning to $0$. Note that $v_j=0$ for $j>i$.
Let $v'$ equal $v$ except we set $v'_i=0$. Then $uv' \in \G$
as it corresponds to the path through $G$ which starts at $0$, returns to $0$ at step $i$, and then loops at $0$ until the end of the word. Since we made only one change in $v$ to obtain $v'$ it is easy to see that $d_H(v,v')$ can be arbitrarily small if $v$ is long enough.
\end{proof}
%We say that an interval $[a,b]\subesteq [0,1]$ is {\em aperiodic} if none of its endpoints is periodic for $T_\beta$. It follows that if $[a,b]\subesteq [0,1]$ is an aperiodic interval for $T_\beta$, then $\mu(\partial[a,b])=0$ for every $T_\beta$-invariant measure $\mu$.
%We say that a point $x\in [0,1]$ is irregular for $T_\beta$ if there exists an aperiodic interval $[a,b]\subesteq [0,1]$ such that
%\[
%\liminf_{n\to\infty}\frac{|\{0\le j<n:T^j_\beta(x)\in[a,b]\}|}{n}< \limsup_{n\to\infty}\frac{|\{0\le j<n:T^j_\beta(x)\in[a,b]\}|}{n}
%\]

\begin{thm} \label{thm:app_beta}
If $\mu$ is a $T_\beta$-invariant Borel probability measure, then the set of $x\in [0,1]$ which generate
%are uniformly distributed with respect to
$\mu$ is $\bP^0_3$-complete and the set of points with irregular $\beta$-expansion, denoted $I(T_\beta)$, is $\bS^0_3$-complete.
\end{thm}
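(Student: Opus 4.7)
The plan is to mimic the proof of Theorem~\ref{thm:app_gls}, transferring the $\bP^0_3$-completeness result from the symbolic system $X_\beta$ to the interval model $([0,1],T_\beta)$ via the coding map. First I would verify that $X_\beta$ satisfies the hypotheses of Theorem~\ref{thm:fmt}: the right feeble specification property is exactly Lemma~\ref{lem:rfs-for-beta}, and the existence of at least two shift-invariant Borel probability measures is clear since $X_\beta$ contains the fixed point $0^\infty$ giving the Dirac measure $\delta_{0^\infty}$, and $\beta>1$ implies $X_\beta$ has positive topological entropy, so an additional (in fact, any ergodic measure of maximal entropy) invariant measure exists. Consequently $G_\nu$ is $\bP^0_3$-complete and $I(X_\beta)$ is $\bS^0_3$-complete for every shift-invariant Borel probability measure $\nu$ on $X_\beta$.

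Next, given a $T_\beta$-invariant Borel probability measure $\mu$ on $[0,1]$, I would split into the case $\mu(\{0\}\cup\bigcup_{k\ge 0}T_\beta^{-k}(\{0\}))=1$ (in which $\mu$ is supported on a countable orbit and the generating set is itself at most countable, hence trivially handled) and the generic case where $\mu(\Omega_\beta)=1$. In the latter case the itinerary map $\iota_\beta$ is defined and continuous on a set of full $\mu$-measure, so the push-forward $\nu:=\iota_\beta^*(\mu)$ is a well-defined shift-invariant Borel probability measure on $X_\beta$ concentrated on $Z_\beta=\iota_\beta(\Omega_\beta)\subseteq Y_\beta$.

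The crucial step is to identify the set of $x\in[0,1]$ that generates $\mu$ with $\psi_\beta(G_\nu\cap Z_\beta)$ modulo a countable set. As in the proof of Theorem~\ref{thm:app_gls}, the identity $\psi_\beta\circ\sigma=T_\beta\circ\psi_\beta$ on $Z_\beta$ together with the fact that fundamental intervals $\Delta(a_1,\ldots,a_k)$ satisfy $\mu(\partial\Delta(a_1,\ldots,a_k))=0$ (their boundary points are preimages of $0$) shows that $z\in G_\nu\cap Z_\beta$ if and only if the $T_\beta$-orbit of $\psi_\beta(z)$ visits every such fundamental interval with the correct frequency; since fundamental intervals approximate arbitrary intervals in measure, this is equivalent to $\psi_\beta(z)$ generating $\mu$. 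Because $X_\beta\setminus Z_\beta$ is countable (it consists of improper expansions and orbits of $\vec{e}$), removing it from a $\bP^0_3$-complete set keeps it $\bP^0_3$-complete, and because $\psi_\beta|_{Z_\beta}$ is a continuous bijection onto a set whose complement in $[0,1]$ is also countable, the $\bP^0_3$-completeness transfers. The main obstacle here is purely bookkeeping: ensuring that the at-most-countable discrepancies between $Y_\beta$, $Z_\beta$, $\Omega_\beta$ and their images do not affect the Wadge complexity, which is immediate since finite or countable modifications do not change Borel completeness at level $\bp^0_3$ for an uncountable Polish space.

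Finally, for the irregular set, I would apply the same argument: any point $x\in[0,1]$ with irregular $\beta$-expansion must lie in $\Omega_\beta$ (points eventually mapped to $0$ have eventually constant expansions and hence regular itineraries), so $\iota_\beta(x)\in I(X_\beta)\cap Z_\beta$, and conversely. Theorem~\ref{thm:fmt} gives that $I(X_\beta)$ is $\bS^0_3$-complete, and by the same countable-modification argument $I(T_\beta)=\psi_\beta(I(X_\beta)\cap Z_\beta)$ is $\bS^0_3$-complete, completing the proof.
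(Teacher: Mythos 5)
Your treatment of the main case $\mu(\Omega_\beta)=1$ follows the paper's proof closely and is essentially correct: push $\mu$ forward to a shift-invariant $\nu$, invoke Lemma~\ref{lem:rfs-for-beta} and Theorem~\ref{thm:fmt} to get $\bP^0_3$-completeness of $G_\nu$, identify generic points with generating points via the fundamental intervals $\Delta(a_1,\ldots,a_k)$ (whose boundaries are $\mu$-null), and absorb the countable discrepancies; the irregular-set argument is likewise the paper's.

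However, your opening case analysis contains a genuine gap. First, the dichotomy is not exhaustive: a $T_\beta$-invariant measure can have $0<\mu(\{0\})<1$ (for instance $\tfrac12\delta_0+\tfrac12\mu_\beta$ with $\mu_\beta$ the Parry measure), and then neither $\mu\bigl(\bigcup_{k\ge 0}T_\beta^{-k}(\{0\})\bigr)=1$ nor $\mu(\Omega_\beta)=1$ holds, so $\iota_\beta^*(\mu)$ is not a probability measure concentrated on $Z_\beta$ and your transfer does not apply to such $\mu$. Second, and more seriously, the degenerate case is not ``trivially handled'': invariance forces $\mu\bigl(\bigcup_k T_\beta^{-k}(\{0\})\bigr)=\mu(\{0\})$, so that case is exactly $\mu=\delta_0$, and the set of points generating $\delta_0$ is \emph{uncountable} (any $x$ whose $\beta$-expansion is $0^{n_1}1\,0^{n_2}1\ldots$ with $n_k$ growing fast enough generates $\delta_0$). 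Indeed the theorem asserts that this set is $\bP^0_3$-complete, which a countable set can never be (a countable set is $\bS^0_2$, hence $\bS^0_3$, hence not $\bP^0_3$-complete); so if your claim were correct it would refute the statement rather than dispose of the case. The paper avoids both problems by working with $Y_\beta$ instead of $Z_\beta$: since $\psi_\beta(0^\infty)=0$ and $0^\infty\in Y_\beta$, \emph{every} $T_\beta$-invariant measure, atomic part at $0$ included, equals $\psi_\beta^*(\nu)$ for some shift-invariant $\nu$ on $Y_\beta$, and the identification of the generating set with $\psi_\beta(G_\nu\cap Y_\beta)$ is carried out uniformly there. Your argument is repairable along the same lines, but as written the two non-generic cases are respectively missing and wrongly dismissed.
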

\begin{proof}  Let $\mu$ be a $T_\beta$-invariant measure on $[0,1]$. Note that $\mu([0,1))=1$, because $1$ is never a periodic point for $T_\beta$.
If $\mu(\{0\}) =0$, then reasoning as in the proof of Theorem \ref{thm:app_gls} we see that $\mu(\Omega_\beta)=1$, and $\mu=\psi_\beta^*(\nu)$ for some shift-invariant Borel probability measure supported on $Z_\beta\subseteq Y_\beta$. Note also that $\psi_\beta(0^\infty)=0$, thus the $T_\beta$-invariant Borel probability measure concentrated at $0$ is the image through $\psi_\beta^*$ of the shift-invariant Borel probability measure concentrated at $0^\infty\in Y_\beta$. It follows that every $T_\beta$-invariant Borel probability measure on $[0,1]$ is the image through $\psi_\beta^*$ of a shift-invariant measure on $Y_\beta$.

Fix a $T_\beta$-invariant measure $\mu$ on $[0,1]$. By the above there is a shift-invariant measure $\nu$ on $Y_\beta$ such $\mu=\psi_\beta^*(\nu)$. Observe that $\nu(Y_\beta)=1$ implies that $\nu(\{\vec{e}\})=0$. Given $a_1\ldots a_k\in\lang(X_\beta)$, define the basic interval
\[
\Delta(a_1,\ldots, a_k)=\{x\in [0,1]:\iota_\beta(x)\in[a_1\ldots a_k]\}.
\]
Since $\mu=\psi_\beta^*(\nu)$, we have \[\mu(\Delta(a_1,\ldots,a_k))=\nu(\psi_\beta^{-1}(\Delta(a_1,\ldots,a_k)))\]
and
\[ \psi_\beta^{-1}(\Delta(a_1,\ldots,a_k))=
\bigg([a_1\ldots a_k]\cap Y_\beta\bigg)\cup\bigg([a_1\ldots a_k]\cap \bigcup_{n\ge 0}\sigma^{-n}(\{\vec{e}\})\bigg).\]
It follows that
\begin{equation}\label{eqn:basic_int}
\mu(\Delta(a_1,\ldots,a_k))=\nu([a_1\ldots a_k]\cap Y_\beta)=\nu([a_1\ldots a_k]).
\end{equation}

Note also that for every $T_\beta$-invariant Borel probability measure $\mu$ and a basic interval $\Delta(a_1,\ldots,a_k)$ %such that $a_1\ldots a_k\neq e_1\ldots e_k$
we have that
\[
\partial \Delta(a_1,\ldots,a_k)\subseteq \bigcup_{n\ge 1}T^{-n}_\beta(\{0\})\cup\{1\}.
\]
(Remember that $0$ is an interior point of any interval $[0,r)$, where $r>0$.) Since basic intervals generate the Borel sigma algebra, we see that a point $x\in[0,1)$ generates $\mu$ if and only if for every $a_1\ldots a_k\in\lang(X_\beta)$ the $T_\beta$-orbit of $x$ visits the basic interval $\Delta(a_1,\ldots,a_k)$ with frequency $\mu(\Delta(a_1,\ldots,a_k))$.
Observe that if $a_1\ldots a_k\in\lang(X_\beta)$, $z\in Y_\beta$, and  $\sigma^n(z)\in[a_1\ldots a_k]\subseteq X_\beta$, then $T^n_\beta(\psi_\beta(z))$ belongs to the basic interval $\Delta(a_1,\ldots, a_k)$, since we have $\psi_\beta\circ \sigma= T_\beta\circ \psi_\beta$ on $Y_\beta$. In particular, if $z\in Y_\beta$ is generic for $\nu$, then using \eqref{eqn:basic_int} we see that $\psi_\beta(z)$ visits a basic interval $\Delta(a_1,\ldots,a_k)$ with frequency $\nu([a_1\ldots a_k])$, so $\psi_\beta(z)\in [0,1]$ generates $\mu$.
Conversely, if $x$ generates $\mu$, then the $T_\beta$-orbit of $x$ visits each basic interval $\Delta(a_1,\ldots,a_k)$ with frequency $\mu(\Delta(a_1,\ldots,a_k))$, which means that the orbit of $\iota_\beta(x)$ under $\sigma$ visits the cylinder set $[a_1\ldots a_k]$ with the same frequency, so $\iota_\beta(x)$ is generic for $\nu$ on $Y_\beta$. It follows that $\psi_\beta(G_\nu\cap Y_\beta)$ is the set of points in $[0,1)$ that generate $\mu$.

By Lemma \ref{lem:rfs-for-beta} the subshift $X_\beta$ has the
right feeble specification property. It is also known that the set of shift-invariant Borel probability measures supported on $X_\beta$ is uncountable.
Thus, $X_\beta$ satisfies the assumptions of Theorem~\ref{thm:fmt}, and we conclude that for each shift-invariant Borel probability measure $\nu$ supported on $X_\beta$ the set $G_\nu\subseteq X_\beta$ of generic points for $\nu$ is $\bP^0_3$-complete. Since $G_\nu\setminus (G_\nu\cap Y_\beta)$ is at most countable, we see that $G_\nu\cap Y_\beta$ is also a $\bP^0_3$-complete set, and $\psi_\beta$  reduces it to the set of points in $[0,1)$ that generate $\mu$. Thus the latter set is also $\bP^0_3$-complete.
%Note that if $\nu(\{\vec{e}\})>0$ for some shift-invariant measure $\nu$ supported on $X_\beta$, then $\vec{e}$ must be a periodic point for $\sigma$, which means that $1$ must be eventually mapped to $0$ by $T_\beta$. Otherwise, $\nu(\{\vec{e}\})=0$ for every such measure, and $1_\beta$ is the unique $\beta$ expansion of $1$.
%Note that $\nu(\{\vec{e}\})>0$ for some shift-invariant measure $\nu$ supported on $X_\beta$ if and only if $\vec{e}$ is a periodic point for $\sigma$.
%Consider a shift-invariant measure $\nu$ supported on $X_\beta$ such that

%, we have that $\mu=\psi_\beta^*(\nu)$ is a $T_\beta$-invariant measure on $[0,1]$.

%Clearly, we also have

%Since $G'\cap(X_\beta\setminus Y)$ is at most countable, we see that $G''=G'\cap Y$ is also a $\bP^0_3$-complete set. We also have %$G_\beta(\mu_\beta)=\psi_\beta(G'')$, so $\psi_\beta$ reduces $G''$ to $G_\beta(\mu_\beta)$, thus the later set is $\bP^0_3$-complete.
%Thus, $X_\beta$ satisfies the assumptions of Theorem~\ref{thm:fmt}, and we conclude that the set $G'\subseteq X_\beta$ of generic points for $\nu_\beta$
%is $\bP^0_3$-complete. Since $G'\cap(X_\beta\setminus Y)$ is at most countable, we see that $G''=G'\cap Y$ is also a $\bP^0_3$-complete set. We also have %$G_\beta(\mu_\beta)=\psi_\beta(G'')$, so $\psi_\beta$ reduces $G''$ to $G_\beta(\mu_\beta)$, thus the later set is $\bP^0_3$-complete.

Let $I(X_\beta)$ be the set of irregular points for $X_\beta$. Using Theorem \ref{thm:fmt} again, we see that $I(X_\beta)$ is a $\bs^0_3$-complete subset of $X_\beta$. Then $I(X_\beta)=(I(X_\beta)\cap X_\beta\setminus Z_\beta)\cup (I(X_\beta)\cap Z_\beta)$ is a disjoint union and $(I(X_\beta)\cap X_\beta\setminus Z_\beta)$ is at most countable. Hence $(I(X_\beta)\cap X_\beta\setminus Z_\beta)$ is a $\bP^0_3$-set. If $I(X_\beta)\cap Z_\beta$ were also a $\bP^0_3$-set, then $I(X_\beta)$ would not be $\bS^0_3$-complete, which is  absurd. Thus $I(X_\beta)\cap Z_\beta$ is a $\bS^0_3$-complete set. Reasoning as above we also obtain that $\psi_\beta(I(X_\beta)\cap Z_\beta)=I(T_\beta)\setminus\{T_\beta^n(1):n\ge 0\}$, which implies that $I(T_\beta)$ is a $\bS^0_3$-complete set.
%Our claim follows from a simple observation: for $z\in\Z_\beta$ the frequencies of visits of the $\sigma$-orbit of $z$ in a cylinder set $[a_1\ldots %a_k]\subesteq X_\beta$, where $a_1\ldots a_k\in\lang(X_\beta)$ coincide with frequencies of visits of $\psi_\beta(z)$ in the basic interval
%\[
%\Delta(a_1,\ldots, a_k)=\{x\in [0,1]:\iota_\beta(x)\in[a_1\ldots a_k]\}.
%\]
%This is because $\psi_\beta\circ \sigma= T_\beta\circ \psi_\beta$ on $Y_\beta\supset Z_\beta$. It follows that $I(T_\beta)\setminus\{T_\beta^n(1):n\ge 0\}$ is a $\bS^0_3$-complete set, and the same must hold true for $I(T_\beta)$, as $\{T_\beta^n(1):n\ge 0\}$ is at most countable.
%Let $\mu$ be a $T_\beta$-invariant measure. If $\mu(\{0\})=0$, then $\mu(\bigcup_{n\ge 0}T_\beta^{-n}(\{0\}))=0$ hence $\mu(\Omega_\beta)=1$ and we can follow the proof. Thus
%$\nu=\iota_\beta^*(\mu)$ is a shift-invariant measure on $X_\beta$ and $G_\nu$ is a
\end{proof}

For each $\beta>1$ there is a Borel probability measure on $[0,1)$ which is invariant for the
transformation $T_\beta$, which is known as {\em Parry measure}. It is characterised as the unique
ergodic $T_\beta$-invariant that is equivalent to Lebesgue measure on $[0,1)$.
We let $\mu_\beta$ denote the Parry measure on $[0,1)$.
A real number $x \in [0,1)$ is \emph{normal with respect to the $\beta$-expansion} if $x$ generates $\mu_\beta$.
%the sequence
%$\{ T^n_\beta(x)\}$ is uniformly distributed with respect to the Parry measure.

\begin{cor} \label{cor:app_beta}
For each $\beta>1$ the set of $x \in [0,1)$ which are normal with respect to the $\beta$-expansion
is a $\bP^0_3$-complete set.
\end{cor}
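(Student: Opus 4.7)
The plan is to observe that this corollary is essentially an immediate specialization of Theorem~\ref{thm:app_beta} to the particular $T_\beta$-invariant measure $\mu_\beta$. Indeed, by the definition stated just before the corollary, a real number $x \in [0,1)$ is normal with respect to the $\beta$-expansion precisely when $x$ generates $\mu_\beta$. Since the Parry measure $\mu_\beta$ is a $T_\beta$-invariant Borel probability measure on $[0,1)$, Theorem~\ref{thm:app_beta} applies with $\mu = \mu_\beta$ and yields that the set of $x$ which generate $\mu_\beta$ is $\bP^0_3$-complete.

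More concretely, the only verification needed is that the hypotheses of Theorem~\ref{thm:app_beta} are satisfied for $\mu_\beta$. I would note explicitly that $\mu_\beta$ is indeed a Borel probability measure (this is part of the standard construction of Parry measure) and that it is $T_\beta$-invariant (also standard). With that, the identification between ``normal'' and ``generic for $\mu_\beta$'' reduces the claim to showing that $G_{\mu_\beta}$ is $\bP^0_3$-complete, which is exactly what Theorem~\ref{thm:app_beta} delivers.

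Since there is no real obstacle here beyond invoking the previous theorem, the proof will be essentially a one-line citation. The only possible subtlety is pointing out that the definition of normal given here (generating Parry measure) coincides with any other reasonable notion of normality one might consider for $\beta$-expansions (for instance, in terms of asymptotic frequencies of admissible blocks in the expansion of $x$), but via the relation $\psi_\beta\circ \sigma = T_\beta\circ \psi_\beta$ on $Y_\beta$ and formula~\eqref{eqn:basic_int}, the frequency of a block $a_1\ldots a_k$ in $\iota_\beta(x)$ equals the frequency with which the $T_\beta$-orbit of $x$ visits the basic interval $\Delta(a_1,\ldots,a_k)$, so the two viewpoints coincide and the corollary follows at once.
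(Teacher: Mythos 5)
Your proposal is correct and matches the paper's (implicit) argument: the corollary is an immediate specialization of Theorem~\ref{thm:app_beta} to the Parry measure $\mu_\beta$, which is a $T_\beta$-invariant Borel probability measure, and normality is by definition genericity for $\mu_\beta$. The paper offers no further argument, so your one-line citation is exactly what is intended.
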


\subsection{Continued fraction expansions}
Our next application of Theorem~\ref{thm:fmt} is to the regular continued fraction expansion. Let
$T\colon [0,1]\setminus\Q\to [0,1]\setminus\Q$ be the continued fraction map given by
$T(x)=\frac{1}{x}-\lfloor \frac{1}{x}\rfloor$. Let $\mu$ be the Gauss measure on
$[0,1]$, which is defined by $\mu(A)= \frac{1}{\ln(2)} \int_0^1 \frac{\chi_A(x)}{1+x} \, dx$.
The Gauss measure is a $T$-invariant ergodic measure equivalent to the Lebesgue measure.
If we let $d(x)=\lfloor \frac{1}{x}\rfloor$, then the regular continued fraction expansion of $x$
is given by $d_1d_2\dots\in\N^\N$, where $d_i=d(T^{i-1}(x))$ for $i\in \N$. This expansion gives a homeomorphism $\iota$ between
$[0,1]\setminus\Q$ and $\N^\N$ such that $\iota\circ T=\sigma\circ \iota$, where $\sigma$ is the shift map on $\N^\N$.
%where $D=\{ 1,2,\dots\}$ is the digit set.
%Let $g \colon
%[0,1]-\Q \to D^\omega$ be the bijection just described, and let $\nu=f(\mu)$ be the corresponding
%measure on $D^\omega$. So, $\nu$ is an invariant measure on the symbolic system
Every $T$-invariant Borel probability measure $\mu$ on $[0,1]\setminus\Q$ corresponds to a unique shift-invariant Borel probability measure $\nu=\iota^*(\mu)$ on $\N^\N$.
Since the full shift $\N^\N$ satisfies the assumptions of Theorem \ref{thm:fmt} we see that the set of sequences generic for $\nu$, denoted by $G_\nu$, is a $\bP^0_3$-complete subset of $\N^\N$. It follows that the set of points that generate $\mu$ given by $\iota^{-1}(G_\nu)$ is also $\bP^0_3$-complete.
The same reasoning shows that the set of continued fraction irregular points is $\bS^0_3$-complete.

\begin{thm} \label{thm:app_cont}
If $\mu$ is a Borel probability measure on $[0,1]\setminus\Q$ which is invariant for the continued fraction map, then the set of points in $[0,1]\setminus\Q$ that generate $\mu$ is a $\bP^0_3$-complete set and the set of points with irregular continued fraction expansion is $\bS^0_3$-complete.
\end{thm}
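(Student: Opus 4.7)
The plan is to push the whole problem from the Gauss system on $[0,1]\setminus\Q$ to the full shift $(\N^\N,\sigma)$ via the continued fraction coding $\iota$, apply Theorem~\ref{thm:fmt} there, and pull the complexity results back. The key structural fact, already recorded in the paragraph preceding the statement, is that $\iota\colon [0,1]\setminus\Q\to\N^\N$ is a homeomorphism intertwining $T$ with $\sigma$. Because $\iota$ is a homeomorphism between Polish spaces, both $\iota$ and $\iota^{-1}$ are continuous; hence for every pointclass $\bP^0_\alpha$ or $\bS^0_\alpha$, a set $A\subseteq [0,1]\setminus\Q$ is in that pointclass (resp.\ is complete for it) if and only if $\iota(A)\subseteq\N^\N$ is.

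The first step is to verify that $\N^\N$ falls under the scope of Theorem~\ref{thm:fmt}. This is immediate: $\N^\N$ is a subshift over the countable alphabet $\N$; it has the (periodic) specification property with $N=0$, since every concatenation of blocks over $\N$ is admissible, and the periodic specification property trivially implies the right feeble specification property; and it admits uncountably many shift-invariant Borel probability measures (for instance, all Bernoulli measures). Pushing the given $T$-invariant Borel probability $\mu$ through $\iota$ produces a $\sigma$-invariant Borel probability $\nu=\iota^*(\mu)$ on $\N^\N$, so Theorem~\ref{thm:fmt} yields that $G_\nu$ is $\bp^0_3$-complete and $I(\N^\N)$ is $\bs^0_3$-complete.

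The second step is to identify the pull-backs of these sets under $\iota$ with the objects named in the theorem. For each block $a_1\ldots a_k\in\N^{<\omega}$ the basic interval $\Delta(a_1,\ldots,a_k)=\iota^{-1}([a_1\ldots a_k])$ of the continued fraction partition corresponds exactly to the cylinder $[a_1\ldots a_k]$, and the conjugacy $\iota\circ T=\sigma\circ\iota$ means that the $T$-orbit of $x$ visits $\Delta(a_1,\ldots,a_k)$ with a given frequency if and only if the $\sigma$-orbit of $\iota(x)$ visits $[a_1\ldots a_k]$ with the same frequency. Since $\mu(\Delta(a_1,\ldots,a_k))=\nu([a_1\ldots a_k])$, the boundary of every basic interval consists of rationals and therefore has $\mu$-measure zero, and the basic intervals generate the Borel $\sigma$-algebra on $[0,1]\setminus\Q$, the usual approximation argument shows that $x$ generates $\mu$ if and only if $\iota(x)\in G_\nu$. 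Similarly, $x$ has an irregular continued fraction expansion if and only if $\iota(x)\in I(\N^\N)$.

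The last step is just to combine: the set of $\mu$-generators equals $\iota^{-1}(G_\nu)$, and the set of continued fraction irregular points equals $\iota^{-1}(I(\N^\N))$, and both $\bp^0_3$-completeness and $\bs^0_3$-completeness are preserved under the homeomorphism $\iota^{-1}$ (continuity in one direction transfers membership in the class, continuity in the other transfers hardness via Wadge reduction). I do not expect any real obstacle here; the most delicate point is only the routine verification that the equivalence between uniform distribution with respect to $\mu$ on $[0,1]\setminus\Q$ and genericity for $\nu$ on $\N^\N$ is valid for \emph{all} invariant $\mu$, which follows from the remark above that the boundary of any basic interval is a countable, hence $\mu$-null, set of rationals.
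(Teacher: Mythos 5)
Your proposal is correct and follows essentially the same route as the paper: both arguments use the fact that the continued fraction coding $\iota$ is a homeomorphism of $[0,1]\setminus\Q$ onto the full shift $\N^\N$ conjugating $T$ with $\sigma$, verify that $\N^\N$ satisfies the hypotheses of Theorem~\ref{thm:fmt}, and transfer the completeness of $G_\nu$ and $I(\N^\N)$ back through $\iota^{-1}$. Your extra care in checking that ``generates $\mu$'' matches genericity for $\nu=\iota^*(\mu)$ (via the basic intervals having $\mu$-null boundaries) is the same routine verification the paper carries out explicitly in the GLS and $\beta$-expansion cases and here leaves implicit.
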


\begin{cor} \label{cor:app_cont}
The set of $x \in [0,1]\setminus\Q$ which are continued fraction normal
is a $\bP^0_3$-complete set.
\end{cor}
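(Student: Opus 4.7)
The plan is to derive Corollary~\ref{cor:app_cont} immediately from Theorem~\ref{thm:app_cont}. The strategy is to identify the class of continued fraction normal numbers with the set of points generating a specific invariant measure, so that the general statement applies.

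First I would recall the definition of continued fraction normality. The Gauss measure $\mu_G$, defined on $[0,1]\setminus\Q$ by $\mu_G(A)=\tfrac{1}{\ln 2}\int_0^1 \tfrac{\chi_A(x)}{1+x}\,dx$, is the canonical $T$-invariant ergodic Borel probability measure equivalent to Lebesgue measure, as noted just before Theorem~\ref{thm:app_cont}. The standard definition of continued fraction normality (going back to L\'evy and Khinchin) states that $x\in[0,1]\setminus\Q$ is continued fraction normal precisely when the asymptotic frequencies of finite blocks of partial quotients in its continued fraction expansion match the values prescribed by the Gauss measure, that is, when $x$ generates $\mu_G$ in the sense defined in \S\ref{sec:app}. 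I would state this as the operative definition for the corollary.

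Second, I would apply Theorem~\ref{thm:app_cont} with $\mu=\mu_G$. Since $\mu_G$ is a $T$-invariant Borel probability measure on $[0,1]\setminus\Q$, the theorem yields directly that the set of $x\in[0,1]\setminus\Q$ generating $\mu_G$ is $\bP^0_3$-complete. By the identification in the previous paragraph, this is exactly the set of continued fraction normal numbers, completing the proof.

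There is essentially no obstacle here: the corollary is a specialization of Theorem~\ref{thm:app_cont} to a particular choice of invariant measure. The only minor point worth flagging is the justification that generating $\mu_G$ in the dynamical sense of uniform distribution with respect to $\mu_G$ coincides with the classical notion of continued fraction normality (matching frequencies of blocks of partial quotients); this follows because the cylinders $[a_1\dots a_k]\subseteq\N^\N$ correspond under the homeomorphism $\iota$ to fundamental intervals whose boundaries are rational and hence $\mu_G$-null, so $\mu_G$-uniform distribution is equivalent to correct frequencies of all finite blocks of partial quotients.
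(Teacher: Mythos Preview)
Your proposal is correct and is exactly the approach the paper intends: the corollary is an immediate specialization of Theorem~\ref{thm:app_cont} to the Gauss measure $\mu_G$, since continued fraction normality is by definition genericity for $\mu_G$. The paper gives no separate proof for this corollary, and your additional remark about the equivalence of the two notions of normality (via the homeomorphism $\iota$ and $\mu_G$-null cylinder boundaries) is a helpful clarification that the paper leaves implicit.
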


\subsection{Sharkovsky-Sivak problem and the tent map}
Our last application considers the tent map $T \colon [0,1]\to [0,1]$ given by
\[
T(x)=\begin{cases} 2x, & \text{ if } 0 \leq x \leq \frac{1}{2}, \text{ and}\\
2-2x, & \text{ if } \frac{1}{2}< x \leq 1.\end{cases}
\]
Taking $\cI=\{I_0=[0,1/2),I_1=[1/2,1)\}$ and the function $\epsilon\colon\{0,1\}\to\{0,1\}$ such that $\epsilon(0)=0$ and $\epsilon(1)=1$ we can easily see that $T_\Ieps$ is the tent map and the $(\Ieps)$-GLS expansion map coincides with the tent map. Moreover, it is well-known and easy to see, either directly or following the reasoning presented above for the general GLS expansions, that the tent map is a factor of the full shift system $\{0,1\}^\N$ under a factor map
$\psi_{\Ieps}$ which is onto and one-to-one except at the countable set $\bigcup_{n\ge 0} T^{-n}(1/2)$, where $\psi_\Ieps$ is two-to-one (see also
\cite{deVries}, Example E in 6.3.5, taking into account
Proposition 6.3.4 (2) therein). As a corollary we obtain the following result.

\begin{cor}If $\mu$ is a Borel probability measure invariant for the tent map $T$, then the set of points that generate $\mu$ (also known as the statistical basin for $\mu$) is a $\bP^0_3$-complete set. The set of irregular points is $\bS^0_3$-complete.
\end{cor}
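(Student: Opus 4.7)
The plan is to deduce this corollary directly from Theorem~\ref{thm:app_gls}. The paragraph immediately preceding the corollary has already performed the essential identification: with $\cI=\{I_0=[0,1/2),\,I_1=[1/2,1)\}$ and $\epsilon(0)=0,\,\epsilon(1)=1$, the generalized GLS expansion map $T_\Ieps$ coincides with the tent map $T$, and the associated coding $\psi_\Ieps\colon\{0,1\}^\N\to[0,1]$ is a continuous factor map which is onto and one-to-one off the countable set $\bigcup_{n\ge 0}T^{-n}(1/2)$ (where it is two-to-one). So the entire machinery of \S\ref{sec:app} is available to us.

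Next I would check that the ambient full shift $\{0,1\}^\N$ satisfies the hypotheses of Theorem~\ref{thm:fmt}: it has the (periodic) specification property, hence a fortiori the right feeble specification property, and it carries uncountably many shift-invariant Borel probability measures. Both of these are classical facts about full shifts on finite alphabets.

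Then, for each $T$-invariant Borel probability measure $\mu$ with $\mu(\{0\})=0$, Theorem~\ref{thm:app_gls} applies verbatim. Pulling $\mu$ back through $\iota_\Ieps$ yields a shift-invariant measure $\nu=\iota_\Ieps^{*}(\mu)$ supported on $\iota_\Ieps(\Omega_\Ieps)\subseteq\{0,1\}^\N$. Theorem~\ref{thm:fmt} gives that $G_\nu$ is $\bP^0_3$-complete in $\{0,1\}^\N$ and that $I(\{0,1\}^\N)$ is $\bS^0_3$-complete. Pushing these forward via the continuous map $\psi_\Ieps$ identifies the statistical basin of $\mu$ with $\psi_\Ieps(G_\nu\cap\iota_\Ieps(\Omega_\Ieps))$ and the irregular set of $T$ with $\psi_\Ieps(I(\{0,1\}^\N)\cap\iota_\Ieps(\Omega_\Ieps))$. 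Because the discrepancies $G_\nu\setminus\iota_\Ieps(\Omega_\Ieps)$ and $I(\{0,1\}^\N)\setminus\iota_\Ieps(\Omega_\Ieps)$ consist of the countably many improper binary expansions, and a countable modification cannot move a set out of its Borel complexity class at the third level, the hardness transfers intact to $[0,1]$.

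The main (and really only) obstacle is the degenerate case $\mu(\{0\})>0$: invariance then forces $\mu$ to charge the fixed point~$0$, and the statistical basin collapses to a small countable set (eventual preimages of $0$), which is too simple to be $\bP^0_3$-complete. I would understand this case as implicitly excluded in the statement, or dispose of it with an elementary \emph{ad hoc} remark. Everything else is just a quotation of Theorem~\ref{thm:app_gls} once one recognizes the tent map as a particularly concrete GLS expansion, and this then answers the Sharkovsky--Sivak problem announced in the introduction.
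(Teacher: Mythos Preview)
Your overall strategy---identify the tent map as the particular generalized GLS map $T_{\cI,\epsilon}$ and then invoke Theorem~\ref{thm:app_gls}---is exactly what the paper does, and for measures with $\mu(\{0\})=0$ your sketch is fine.

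The genuine gap is your treatment of the case $\mu(\{0\})>0$, and in particular $\mu=\delta_{\mathbf 0}$. Your assertion that ``the statistical basin collapses to a small countable set (eventual preimages of $0$)'' is false. A point $x\in[0,1]$ generates $\delta_{\mathbf 0}$ precisely when its $T$-orbit spends asymptotically all of its time in every neighbourhood of $0$; in symbolic terms, $z\in\{0,1\}^\N$ is generic for $\delta_{0^\infty}$ iff for every $k$ the frequency of the block $0^k$ in $z$ tends to $1$. There are uncountably many such $z$ which are \emph{not} eventually $0$: take any $z$ with $1$'s at positions $p_1<p_2<\cdots$ where $p_{i+1}-p_i\to\infty$. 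Their images under $\psi_{\cI,\epsilon}$ lie in $\Omega_{\cI,\epsilon}$ and generate $\delta_{\mathbf 0}$ without ever hitting $0$. So the basin is far from countable, and your ``elementary \emph{ad hoc} remark'' disposing of this case cannot work.

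In fact this case is the whole point: the line immediately following the corollary singles out the statistical basin of $\delta_{\mathbf 0}$ as the instance that answers Sharkovsky--Sivak's Problem~5, so it certainly cannot be ``implicitly excluded''. The hypothesis $\mu(\{0\})=0$ in Theorem~\ref{thm:app_gls} is used only to ensure $\mu(\Omega_{\cI,\epsilon})=1$, which is convenient for \emph{defining} the lifted measure $\nu=\iota_{\cI,\epsilon}^*(\mu)$. For $\mu=\delta_{\mathbf 0}$ one simply takes $\nu=\delta_{0^\infty}$ directly; Theorem~\ref{thm:fmt} still gives that $G_\nu\subseteq\{0,1\}^\N$ is $\bp^0_3$-complete, and the same transfer argument through $\psi_{\cI,\epsilon}$ (the discrepancy with the basin of $\delta_{\mathbf 0}$ in $[0,1]$ being the countable set of eventual preimages of $0$) yields $\bp^0_3$-completeness of the basin. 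Your proposal needs to replace the incorrect claim with this observation.
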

In particular, the statistical basin for the Dirac mass at $0$ and the tent map is a $\bP^0_3$-complete set, which answers \cite[Problem 5]{SS}.

Also as a corollary of Theorem~\ref{thm:fmt} we can answer a question of
Sharkovsky and Sivak \cite[Problem 3]{SS}, who asked whether there is a continuous map
$f \colon [0,1]\to [0,1]$ which has an invariant Borel probability measure $\mu$ such that
the set of generic points in $\bP^0_3$-complete.

%% We say that $T$ has the \emph{specification property} if for every $\eps>0$ there is a constant $N_\eps\in\N$ such that for each $k\in\N$ and every specification $\xi\in(X\times\N)^k$ there is a point which traces it $\eps$-close with all gaps equal $N_\eps$.

%% \marginpar{\color{blue} Discuss examples of systems with the specification property. Remark about the periodicity condition?}
%% \vspace{1cm}
%% \centerline{$\bigstar\bigstar\bigstar$}
%% \vspace{1cm}

%% A map $T$ has the \emph{weak specification property} if for every $\eps>0$ there is a function $\cN_\eps\colon \N\to\N$ satisfying:
%% \begin{enumerate}
%%   \item $\cN_\eps(n)/n\to 0$ as $n\to\infty$,
%%   \item for each $k\in\N$ and every specification $\xi\in(X\times\N)^k$ there is a point which traces it $\eps$-close with gap $s_i$ equal $\cN(n_{i+1})$ for $1\le i<k$.
%% \end{enumerate}

%% The strong approximate product structure follows from any of the following properties:
%% the (weak) specification property with respect to a regular periodic decomposition, the orbit gluing property (the latter holds for any transitive compact dynamical system with the shadowing property and transitivity).

%% Application: take a full binary shift and let $\phi_j$ be a characteristic function of the cylinder $[j]$ for $j=0,1$. For every $\alpha\in  (0,1)$ we see that the set $B_{\phi_0}(\alpha)\cap B_{\phi_1}(1-\alpha)$ is $\bP^0_3$-complete, in particular the set of all reals which are simply normal in base $2$ is $\bP^0_3$-complete.

%% TASK: write more applications like this.

\section{Concluding remarks}
Note that there are numeration systems for which our approach does not work. For example, the Cantor series expansions
are obtained through nonautonomous dynamical systems and thus
require a separate analysis.  The most up to date and general
results on normal numbers in this context are found in \cite{AireyManceHDDifference,AireyManceNormalOrders,Mance4}, respectively.
In \cite{AireyJacksonManceComplexityCantorSeries}
descriptive complexity results similar to the ones in the present paper are obtained for
Cantor series expansions. With the results presented here,
this shows that $\bp^0_3$-completeness is another universal property that holds for
all known examples of normal numbers.

\section{Acknowledgements}
The authors would like to thank the anonymous reviewer for his/her careful, constructive
and insightful comments which helped to improve this work.
The research of SJ was supported by NSF grant 1800323.
The research of DK was supported by National Science Centre (NCN) grant
2013/08/A/ST1/00275 and his stay in Rio de Janeiro, where he started to work on these problems was supported by CAPES/Brazil grant no. 88881.064927/2014-01.
DK gratefully acknowledges the great hospitality of the Federal University of Rio de Janeiro during that stay.
%\nocite{WeissBook, AJMGenericPoints}

\bibliographystyle{amsplain}

%\bibliography{mance}

\providecommand{\bysame}{\leavevmode\hbox to3em{\hrulefill}\thinspace}
\providecommand{\MR}{\relax\ifhmode\unskip\space\fi MR }
% \MRhref is called by the amsart/book/proc definition of \MR.
\providecommand{\MRhref}[2]{%
  \href{http://www.ams.org/mathscinet-getitem?mr=#1}{#2}
}
\providecommand{\href}[2]{#2}

\end{document}